\theoremstyle{definition}
\numberwithin{example}{section}
\renewcommand{\tilde}{\widetilde}
\newcommand{\elemsub}{\preccurlyeq}
\renewcommand{\hat}{\widehat}
\newtheoremstyle{normal}
{}
{}
{\normalfont}
{}
{\bfseries}
{}
{0.8em}
{}
\newtheoremstyle{kursiv}
{}
{}
{\itshape}
{}
{\bfseries}
{}
{0.8em}
{}
{}
\theoremstyle{kursiv}
\newtheorem{thm}{Theorem}[section]
\newtheorem{lem}[thm]{Lemma}		 				 
\newtheorem*{claim*}{Claim}		
\newtheorem{cor}[thm]{Corollary}		
\newtheorem{prop}[thm]{Proposition}
\theoremstyle{normal} 
\newtheorem{df}[thm]{Definition}
\newtheorem{remark}[thm]{Remark}
\newcommand{\calK}{\mathcal{K}}
\newcommand{\Cl}{\mathrm{Cl}}
\newcommand{\G}{\Gamma}
\DeclareMathOperator{\acl}{acl}
\DeclareMathOperator{\Aut}{Aut}     
\DeclareMathOperator{\dcl}{dcl}
\DeclareMathOperator{\tp}{tp}
\newcommand{\M}{\mathfrak{M}}
\newcommand{\g}{\gamma}
\renewcommand{\phi}{\varphi}
\def\Ind#1#2{#1\setbox0=\hbox{$#1x$}\kern\wd0\hbox to 0pt{\hss$#1\mid$\hss}
\lower.9\ht0\hbox to 0pt{\hss$#1\smile$\hss}\kern\wd0}
\def\ind{\mathop{\mathpalette\Ind{}}}
\def\notind#1#2{#1\setbox0=\hbox{$#1x$}\kern\wd0
\hbox to 0pt{\mathchardef\nn=12854\hss$#1\nn$\kern1.4\wd0\hss}
\hbox to 0pt{\hss$#1\mid$\hss}\lower.9\ht0 \hbox to 0pt{\hss$#1\smile$\hss}\kern\wd0}
\def\inds{\left.\ind\!\!\!\right.^*}
\begin{document}
\title{On the model theory of open generalized polygons}
\date{\today}
\author{Anna-Maria Ammer}
\thanks{AM: ammer@gmail.com}
\author{Katrin Tent}
\thanks{KT: Universit\"at M\"unster,  tent@wwu.de}
\thanks{The authors were partially supported by the Deutsche Forschungsgemeinschaft (DFG, German Research Foundation) under Germany's Excellence Strategy EXC 2044-390685587, Mathematics M\"unster: Dynamics-Geometry-Structure and SFB 1442.}


\maketitle

\begin{abstract}

We show that for any $n\geq 3$ the theory of open generalized $n$-gons is complete, decidable and strictly stable,  yielding a new class of examples in the zoo of stable theories.

\end{abstract}

\section{Introduction}
\label{intro_section}

Generalized polygons  were introduced by Tits in order to give geometric interpretations of the groups of Lie type in rank $2$, in the same way that projective planes  correspond to groups of type $A_2$. In fact, generalized polygons are the rank $2$ case of spherical buildings. A generalized $n$-gon is a bi-partite graph with diameter $n$ (i.e. any two vertices have distance at most $n$), girth $2n$ (i.e. the smallest cycles have length $2n$) and such that all vertices have valency at least $3$. Clearly, for $n=2$ such a graph is simply a complete bi-partite graph and in what follows we will always assume $n\geq 3$. Thinking of the bi-partition as corresponding to points and lines, we see that the case $n=3$ is simply a different way of phrasing the axioms of a projective plane, namely: any two points lie on a unique line, any two lines intersect in a unique point and every line contains at least three points. (It then easily follows that every point has at least three lines passing through it.) Remarkably, if  the graph is finite, then by a fundamental result of Feit and Higman \cite{FH} the only possible values for $n$ are $3, 4, 6,$ and $8$. Similar restrictions hold for other well-behaved or \emph{tame} categories of generalized polygons, e.g. if one  assumes that the underlying sets of vertices are compact, or algebraic, one obtains the same restrictions. Since we tend to think of finite Morley rank as a rather strong tameness assumption it might be remarkable that  this restriction does not hold in this setting, see \cite{veryhomogeneous}.

In fact, it is easy to see that infinite generalized $n$-gons exist for any $n\geq 3$: starting with a finite bi-partite configuration that does not contain any $2k$-cycles for $k<n$, one can easily complete this by freely adding enough paths in order to make sure that the graph has diameter $n$ (see Definition~\ref{def: free completion} below). In fact,  such constructions yield the only known examples of generalized $n$-gons for $n\neq 3,4,6, 8$.

Free projective planes were already studied by M. Hall \cite{Hall}, Siebenmann~\cite{Siebenmann},  and Kopejkina~\cite{Kopeikina} and their model theory was studied in \cite{forest}, \cite{HP} and \cite{TZi}. The theory of the free projective planes is strictly stable by \cite{HP} and the notion of independence in the sense of stability agrees with the one studied in \cite{pseudospace} and \cite{2-ample}. In this note we extend the results from \cite{forest} and \cite{HP} to open generalized polygons, using the methods developed in \cite{veryhomogeneous}, \cite{pseudospace} and \cite{2-ample}. In particular, it was shown in \cite{HP} that the theory of open projective planes is complete, strictly stable, does not have a prime model and has uncountably many non-isomorphic countable models.

\section{Generalized polygons}

We first recall  some graph-theoretic notions.
For $a$ and $b$ in $A$, the \emph{distance} $d(a,b)$ between $a$ and $b$ is the smallest number $m$ for which there is a path $a=a_0,a_1,...,a_m=b$ with $a_i$ in $A$, where $a_i$ and $a_{i+1}$ are incident for $0\leq i<m.$ We may write $d_A(a,b)$ to emphasize the dependence on the graph $A$.\\

The \emph{girth} of a graph $A$ is the length of a shortest cycle in $A$.
The \emph{diameter} of a graph $A$ is the maximal distance between two elements in $A$. 
We say that a subgraph $A$ of a graph $B$ is \emph{isometrically embedded} into $B$ if for all $a, b\in A$ we have $d_A(a,b)=d_B(a,b)$. For a vertex $a\in A$ we write $D_1(a)$ for the set of neighbours of~$a$. Then $|D_1(a)|$ is called the valency of $a$ in $A$.

From now on we fix $n\geq 3$.

\begin{df} A \emph{weak generalized n-gon} $\Gamma$ is a bipartite graph with diameter $n$ and girth $2n$. If $\Gamma$ is \emph{thick}, i.e. if each vertex has valency at least 3, then $\Gamma$ is a \emph{generalized} $n$-gon.\\
A \emph{partial} $n$-gon is a connected bipartite graph of girth at least $2n$.\\
A (partial) $n$-gon $\Gamma_0$ is \emph{non-degenerate}, if $\Gamma_0$ contains  a cycle of length at least $2n+2$ or a path $\g=(x_0,...,x_{n+3})$ with $d_{\Gamma_0}(x_0,x_{n+3})={n+3}$.\\

A (generalized) $n$-gon $\G_0$ contained as a subgraph in a generalized $n$-gon $\G$ is called a (generalized) sub-$n$-gon of $\G$.
\end{df}

\begin{remark}

Note that every thick generalized $n$-gon is non-degenerated.\\

The assumption that a partial $n$-gon is connected is not strictly necessary (and it is not required in \cite{Hall} for $n=3$). Note that for $n=3$ any two distinct points have distance $2$ (and similarly for lines). This is not true anymore for $n>3$, so the requirement that the graph is connected prevents ambiguities.
\end{remark}
 
\begin{df}\label{clean arch}
Let $(x=x_0,\ldots, x_{n-1}=y)$ be a path in $\Gamma$. If  every $x_i, 1\leq i\leq n-2$ has valency $2$ in $\Gamma$, then $(x_1,\ldots, x_{n-2})$ is called a clean arc in $\G$ (with endpoints $x, y$).
A \emph{loose end} is a vertex of valency at most $1$.

A \emph{hat-rack} of length $k\geq n+3$ is a path $(x_0,...,x_k)$  together with subsets of $D_1(x_i)$ for $1\leq i\leq k-1$.
\end{df}
The following definition is due to Tits \cite{Tits}, who first introduced free extensions for generalized polygons, expanding earlier definitions by  M. Hall and Siebenmann \cite{Hall, Siebenmann}.

\begin{df}\label{def: free completion} Let $\Gamma_0$ be a partial $n$-gon. We define the free completion of $\Gamma_0$ by induction on $i<\omega$ as follows:\\
For $i\geq0$ we obtain $\Gamma_{i+1}$ from $\Gamma_i$ by adding a clean arc  between every two elements of $\Gamma_i$ which have distance $n+1$ in $\Gamma_i$. Then $\Gamma=F(\Gamma_0)=\bigcup\limits_{i<\omega}\Gamma_i$ is called the \emph{free n-completion} of $\Gamma_0$. We say that $\Gamma$ is \emph{freely generated} over $\Gamma_0$. 
\end{df}

Note that if $\Gamma_0$ does not contain vertices at distance $\geq n+1$, then $F(\Gamma_0)~=~\Gamma_0$. Also note that by adding a clean arc between vertices  of distance $n+1$ we are creating a new cycle of length $2n$.

\begin{remark}\label{rem:short paths are unique} 
If two elements in a generalized $n$-gon $\Gamma$ have distance less than $n$, there is a unique shortest path in $\Gamma$ connecting them (otherwise we would obtain a short cycle). \\
A weak generalized $n$-gon which contains a $2(n+1)$-cycle is a generalized $n$-gon (conf. \cite{VM}, 1.3).  Hence if $\Gamma_0$ is a partial, non-degenerate $n$-gon, then  $F(\Gamma_0)=\Gamma$ contains a $2(n+1)$-cycle and in fact, $\Gamma$ is an infinite generalized $n$-gon (conf. \cite{VM}) and every vertex $z$ in $F(\Gamma_0)$ has infinite valency.
\end{remark}

We also note the following for future reference:
\begin{remark}\label{rem:opposites}
Let $\G$ be a generalized $n$-gon and let $\g\subset\G$ be a $2n$-cycle. Then for any $x\in\g$ there is a unique $x'\in\g$ with $d(x, x')=n$, ($x'$ is called the opposite of $x$ in $\g$) and for any $y\in D_1(x)\setminus\g$ there is a unique $y'\in D_1(x')$ such that $d(y,y')=n-2$.
Note that the result of adding a clean arc to $\g\cup\{y\}$ is the same as adding a clean arc to $\g\cup\{y'\}$.
\end{remark}

\begin{df}\label{def:generated}
Let $\Gamma$ be a generalized $n$-gon and $A\subset \Gamma$. Then   $\langle A\rangle_\Gamma$ denotes the intersection of all generalized sub-$n$-gons of $\Gamma$ containing $A$. For $\G_0\subset\G$ we put  $\langle A\rangle_{\G_0}= \langle A\rangle_\G\cap\G_0$.
\end{df}
\begin{remark} If $A\subseteq \G_0\subseteq\G$, then $\langle A\rangle_{\G_0}$ is the intersection of all $B\supset A, B\subseteq\G_0,$ such that $A$ is isometrically  contained in $B$. If $A$ is non-degenerate, then $\langle A\rangle_\Gamma$ is a generalized sub-$n$-gon of $\Gamma$, the $n$-gon (not necessarily freely) generated by $A$ in $\Gamma$. Since shortest paths between vertices at distance $n-1$ are unique, clearly $\langle A\rangle_\Gamma\subseteq\acl(A)_\Gamma$.
If $\Gamma=F(A)$, then $\langle A\rangle_\Gamma=\Gamma$.
\end{remark}

We note the following useful observations:
\begin{lem}\label{lem:upward closure}
Let $\Gamma_0$ be a non-degenerate partial n-gon, and let $\Gamma=F(\Gamma_0)~=~\bigcup\Gamma_i$ be as in Definition~\ref{def: free completion}. 
\begin{enumerate}
\item If $A\subset \G_k\setminus \G_i$ is isometrically embedded into $\G_k$, then $\langle A\rangle_\Gamma$ does not intersect $\G_i$  and $\langle A\rangle_\Gamma=F(A)$. 
\item If $A\subset \Gamma_0$ is such that $\Gamma_k\setminus A$ is isometrically embedded into $ \G_k$, then $\langle \G_k\setminus A\rangle_\G$ does not intersect $A$.
\item Any automorphism of $\G_0$ extends to an automorphism of $\G$.
\end{enumerate}
\end{lem}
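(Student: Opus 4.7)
The plan is to prove all three parts by induction along the stages of the free completion $\G=\bigcup_{j<\omega}\G_j$, exploiting the fact that the interior vertices of a clean arc attached at stage $j+1$ are fresh vertices of $\G_{j+1}\setminus\G_j$ with valency $2$ at the moment of their addition.

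For (i), I would build $\langle A\rangle_\G$ as an ascending union $B_0\subseteq B_1\subseteq\cdots$ inside $\G$, with $B_0=A$ and $B_{j+1}$ obtained from $B_j$ by adjoining, for every pair $a,b\in B_j$ at $B_j$-distance $n+1$, the interior vertices of the clean arc attached to $\{a,b\}$ in the construction of $\G$. Running this construction in parallel with $F(A)=\bigcup_jF_j(A)$ yields inductively an isomorphism $B_j\cong F_j(A)$ together with an isometric embedding $B_j\hookrightarrow\G_{k+j}$; the interior vertices newly added when forming $B_{j+1}$ then lie in $\G_{k+j+1}\setminus\G_{k+j}$, which is disjoint from $\G_i$ since $i<k$. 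The union $B=\bigcup_jB_j$ is closed under clean arc completion in $\G$, hence a generalized sub-$n$-gon of $\G$ containing $A$; conversely every generalized sub-$n$-gon of $\G$ containing $A$ must, by having diameter $n$, contain a length-$(n-1)$ path between each such completed pair, and in $\G$ this path is unique by Remark~\ref{rem:short paths are unique}, so it is precisely our clean arc. Hence $B=\langle A\rangle_\G\cong F(A)$ and $B\cap\G_i=\emptyset$.

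For (ii), the same procedure starting from $\G_k\setminus A$ in place of $A$ realises $\langle\G_k\setminus A\rangle_\G$ as the union of $\G_k\setminus A$ with interior vertices of clean arcs attached in $\G$ only at stages strictly above $k$; these interior vertices lie in $\G\setminus\G_k$, hence are disjoint from $A\subseteq\G_0\subseteq\G_k$, so $\langle\G_k\setminus A\rangle_\G\cap A=\emptyset$. For (iii), I would extend $\sigma\in\Aut(\G_0)$ stage by stage: if $\sigma$ has been extended to $\widetilde{\sigma}_j\in\Aut(\G_j)$, it permutes the unordered pairs of vertices of $\G_j$ at distance $n+1$, and $\G_{j+1}$ is obtained from $\G_j$ by attaching exactly one clean arc above each such pair. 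Any choice of oriented bijection from the arc above $\{a,b\}$ onto the arc above $\{\widetilde{\sigma}_j(a),\widetilde{\sigma}_j(b)\}$ extends $\widetilde{\sigma}_j$ to $\widetilde{\sigma}_{j+1}\in\Aut(\G_{j+1})$, and the colimit $\bigcup_j\widetilde{\sigma}_j$ is the required automorphism of $\G$.

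The principal technical obstacle, shared by (i) and (ii), is verifying that the isometric embedding of $B_j$ into $\G_{k+j}$ persists when passing to $B_{j+1}\hookrightarrow\G_{k+j+1}$. The point is that any potential shortcut in $\G_{k+j+1}$ between vertices of $B_j$ must traverse at least one ``outside'' clean arc (one with at least one endpoint outside $B_j$), and each such traversal contributes length exactly $n-1$ while shortening the corresponding $\G_{k+j}$-length by only $2$. Combined with the girth bound $\geq 2n$, a short case analysis by the number of outside arcs used shows that no such path in $\G_{k+j+1}$ can beat the $B_{j+1}$-distance, preserving the induction.
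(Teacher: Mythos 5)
Your proposal is correct and follows essentially the same route as the paper's (very terse) proof: induction along the stages of the free completion, using that the interior vertices of a newly attached clean arc have valency $2$ so that any path leaving the arc must exit through its endpoints, which reduces everything to the isometric embedding of the previous stage. In fact you supply more detail than the paper does, in particular by explicitly isolating and addressing the step that the isometry of $B_j\hookrightarrow\G_{k+j}$ persists at the next stage, which the paper dispatches with a single sentence.
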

\begin{proof}
All parts follow directly from the construction:
E.g. for (i) it suffices to show inductively that $\G_1(A)$ is isometrically embedded into $\G_{k+1}\setminus\G_i$. Then (i) follows by induction. Let  $\gamma\subset \G_{k+1}\setminus\G_i$ be   a clean arc connecting $a, b\in A$ with $d_A(a,b)=n+1$.  Any  $c\in\gamma$ has valency $2$, so any path from $c$ to an element in $A$ passes through $a$ or $b$. Since $A$ is isometrically embedded in $\G_k$, the claim follows. The proof for part (ii) is similar and part (iii) is clear.
\end{proof}

Now we can state the main definition of this note, extending the definition of free and open projective planes from \cite{Hall}  to generalized $n$-gons.
\begin{df}\label{open n-gon}\label{df:Gammak}
A (partial) generalized $n$-gon $\Gamma$ is \emph{open} if every finite subgraph contains a loose end or a clean arc.

We call a generalized $n$-gon $\Gamma$ \emph{free} if it is the free $n$-completion of a hat-rack of length at least $n+3$. In particular,
we let $\G^k$ denote the free $n$-completion of the path $\gamma_k=(x_0,\ldots, x_k)$ for $k\geq n+3$.
\end{df}
Note that $\G^k$ is  a free generalized $n$-gon for $k\geq n+3$.

\begin{remark}\label{sopkina}
Clearly, every free generalized $n$-gon is open.
Beware, however,  that the converse is not true in general, see  Proposition~\ref{prop:not free}, but holds for finitely generated generalized $n$-gons, see Proposition~\ref{prop:open+fg is free}.
\end{remark}

Clearly, as observed by \cite{HP} for  the case $n=3$ being an open generalized $n$-gon is a first-order property. We can therefore define:

\begin{df}\label{df:T_n}
Let $T_n$ denote the theory  of  open generalized $n$-gons in the language  of graphs expanded by predicates for the bipartition. 
\end{df}
Note that $T_n$ is $\forall\exists$-axiomatizable.
We start with some easy observations:
\begin{remark}
It follows immediately from Remark~\ref{rem:short paths are unique} and the definition of an open generalized $n$-gon that for $M\models T_n$ and a nondegenerate subgraph $A\subseteq M$ we have $\acl(A)\models T_n$. In other words, every algebraically closed nondegenerate subset of a model of $T_n$ is itself a model of $T_n$.
Clearly, $\acl(A)$ is prime over $A$ (cf. \cite{TZ} 5.3).
\end{remark}

\begin{remark}
Let $T_{n,\gamma}$ be the theory $T_n$ expanded by constants for a path $\gamma=(a_0,\ldots, a_{n+3})$. Then $F(\gamma)$ is the prime model of $T_{n,\gamma}$ since  $F(\gamma)$ is algebraic over $\gamma$, hence countable and atomic, hence prime (cf \cite{TZ}, 4.5.2).
\end{remark}

This is similar to the situation in free groups described in Sela's seminal results, but obviously much easier to prove in the current setting: both theories are strictly stable, and only the 'natural embeddings' are elementary. Namely, we will see later that $\G^k\elemsub \G^m$ if and only if $k\leq m$ and the embedding is the natural one. 

Adapting\footnote{Note that Siebenmann also allows to add vertices of valency $0$.} Siebenmann's  definition for the case $n=3$ \cite{Siebenmann} we define:
\begin{df} If $A$ is a partial $n$-gon, a \emph{hyper-free minimal extension} of $A$ is  an extension by a clean arc between two elements $a, b\in A$ with $d_A(a,b)=n+1$ or by a loose end.\\
 Let $\Gamma$ and $\Gamma'$ be partial $n$-gons. We say that $\Gamma$ is \emph{HF-constructible} from $\Gamma'$ (or over $\Gamma'$) if there is an ordinal $\alpha$ and a sequence $(\Gamma_\beta)_{\beta<\alpha}$ of partial $n$-gons such that
\begin{enumerate}
\item $\Gamma_0=\Gamma';$
\item if $\beta=\gamma+1$, then $\Gamma_\beta$ is a hyper-free minimal extension of $\Gamma_\gamma$;
\item if $\beta$ is a limit ordinal, then $\Gamma_\beta=\bigcup\limits_{\gamma<\beta}\Gamma_\gamma$;
\item $\Gamma=\bigcup\limits_{\beta<\alpha}\Gamma_\beta$.
\end{enumerate}
\end{df}

Clearly, any free completion of a partial $n$-gon $\G_0$ is HF-constructible from $\G_0$.

As in \cite{HP} one can show that any open generalized $n$-gon has an HF-ordering, but since we will not be using this ordering, we omit the details.

\begin{df}\label{df:closed} Let $A, B\subseteq M\models T_n, A\cap B=\emptyset$. We call $B$ \emph{closed over} $A$ if $B$ contains neither a clean arc  with endpoints in $A\cup B$ nor a loose end. We say that $B$ is \emph{open over} $A$ if $B$ contains no finite set closed over $A$ and in this case we write $A\leq_o A\cup B$.
 We write $\hat{A}_M=A\cup \bigcup\{B_0\subset M\ |\ B_0 \textit{ finite and closed over } A\}$.
\end{df}
\begin{remark}\label{rem:union of closed sets is closed}
Note that if $B_1, B_2$ are closed over $A$, then so is $B_1\cup B_2$.
\end{remark}

\begin{lem}\label{lem:finite is constructible}\label{rem:open=HF} If $B$ is open over $A$ and $B\setminus A$ is finite, then $B$ is HF-constructible over $A$. In particular,
 if $A$ is a finite open partial $n$-gon, then $A$ is  HF-constructible from the empty set.
In particular, if $A\leq_o\G$ where $\G$ is a generalized $n$-gon, then $F(A)\cong \langle A\rangle_\G \subseteq\G$.
\end{lem}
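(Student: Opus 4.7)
The plan is to prove the main implication by induction on $|B|$ (recall $A$ and $B$ are disjoint, so $|B\setminus A|=|B|$). The base case $|B|=0$ is trivial, giving the HF-sequence of length zero. For the induction step, the fact that $B$ is open over $A$, applied to the finite set $B$ itself, shows that $B$ is not closed over $A$, so $B$ contains either a loose end or a clean arc with endpoints in $A\cup B$. The strategy in both cases is the same: remove the relevant vertices to obtain $B'\subsetneq B$, observe that $B'$ is still open over $A$ (any finite subset of $B'$ closed over $A$ would in particular be a finite subset of $B$ closed over $A$), apply the induction hypothesis, and then re-attach the removed vertices via a single HF-extension.

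In the loose-end case, if $b\in B$ has valency at most one in $A\cup B$ and $B'=B\setminus\{b\}$, then adding $b$ to $A\cup B'$ is a legitimate loose-end HF-extension, since its valency in $A\cup B'\cup\{b\}=A\cup B$ is still at most one. In the clean-arc case, let $(x_0,x_1,\ldots,x_{n-1})$ be the arc with inner vertices $x_1,\ldots,x_{n-2}\in B$ and endpoints $x_0,x_{n-1}\in A\cup B$; set $B'=B\setminus\{x_1,\ldots,x_{n-2}\}$ and restore the arc as a clean-arc HF-extension. To check legality, observe that any path from $x_0$ to $x_{n-1}$ in $A\cup B'$, concatenated with the removed arc of length $n-1$, forms a cycle in the partial $n$-gon $A\cup B$; by girth $\geq 2n$, this alternate path has length $\geq n+1$, so $d_{A\cup B'}(x_0,x_{n-1})\geq n+1$. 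This is the main delicate point I anticipate: the definition of a clean-arc HF-extension literally asks for $d_A(a,b)=n+1$, while the girth argument only delivers ``$\geq n+1$'' (or $\infty$ if $x_0$ and $x_{n-1}$ lie in different components of $A\cup B'$). I expect this to be resolved by reading the definition as ``$\geq n+1$'' (the natural reading, since this is precisely the girth condition needed for the new arc not to create short cycles) or else by a careful reordering of HF-steps so that the endpoints realize distance exactly $n+1$ at the moment the arc is added.

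For the first ``in particular,'' applying the main claim with $\emptyset$ in place of $A$ and $A$ in place of $B$ works, because openness over $\emptyset$ unwinds to exactly the definition of an open partial $n$-gon. For the second ``in particular,'' write $\langle A\rangle_\G\setminus A$ as an increasing union of finite subsets $B_i$; by $A\leq_o\G$ each $B_i$ is open over $A$ and hence $A\cup B_i$ is HF-constructible from $A$ by the main claim. Concatenating these sequences exhibits $\langle A\rangle_\G$ as a (transfinite) HF-construction from $A$. Since $\langle A\rangle_\G$ is a generalized $n$-gon, it is thick, so in the limit no loose ends survive; the HF-construction can then be reorganized to use only clean-arc extensions between pairs at distance $n+1$, which is precisely the free $n$-completion procedure. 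This yields the identification $F(A)\cong\langle A\rangle_\G$ over $A$, invoking Lemma~\ref{lem:upward closure} to ensure that the iterated clean-arc extensions inside $\G$ match those prescribed by the construction of $F(A)$ step by step.
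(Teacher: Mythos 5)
Your proof is correct and is essentially the paper's own argument: the paper phrases it as a minimal counterexample ($B\setminus A$ minimal could contain neither a loose end nor a clean arc, contradicting openness over $A$), which is exactly your induction read backwards, and the two ``in particular'' clauses are handled the same way. The subtlety you flag --- that the girth argument only gives $d_{A\cup B'}(x_0,x_{n-1})\geq n+1$ while the definition of a hyper-free minimal extension literally asks for equality --- is genuine, but it is a defect of the stated definition rather than of your argument, and your first proposed resolution is the right one: under the strict reading a $2(n+1)$-cycle would be a finite open partial $n$-gon that is not HF-constructible from $\emptyset$ (every strict clean-arc step creates a $2n$-cycle, so the first cycle appearing in any such construction has length $2n$, whereas the $2(n+1)$-cycle contains no $2n$-cycle), so no reordering of steps can save the equality version and the intended condition must be $d_A(a,b)\geq n+1$. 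The paper's one-line proof silently assumes this reading; your version makes the assumption explicit, which is an improvement rather than a gap.
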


\begin{proof}
If $B\setminus A$ is a minimal counterexample, then it cannot contain either a loose end or a clean arch, contradicting the assumption of $B$ being open over $A$. 
\end{proof}

Now consider the class $\calK$ of finite open partial $n$-gons (in the language of bipartite graphs) with strong embeddings given by $\leq_o$.  Note that $\calK$ is contained both in the class of partial $n$-gons considered in \cite{forest}  as well as in the class of partial $n$-gons considered in \cite{veryhomogeneous} (see Lemma~\ref{lem:veryhomogeneous}).

\begin{df}\label{canonical amalgam} For graphs $A\subseteq B, C$, let $B\otimes_A C$ denote the free amalgam of $B$ and $C$ over $A$.\\
Let $A\leq_o C, B$ be open partial $n$-gons (contained in some generalized $n$-gon~$\G$) with $\langle A\rangle_B=\langle A\rangle_C=A$.  Then we call $B\oplus_A C:=F(B\otimes_A C)$ the \emph{canonical amalgam} of $B$ and $C$ over $A$.
\end{df}

The canonical amalgam was used in \cite{forest} (and for $n=3$ in \cite{HP}).
\begin{remark}\label{amalgam HF} (i) If $A\leq_o C, B$ are open partial $n$-gons with $\langle A\rangle_B=\langle A\rangle_C=A$, then  $B,C\leq_o B\otimes_A C\leq_o B\oplus_A C$.
If $B\otimes_A C$ is non-degenerate, then $B\oplus_A C$ is an open generalized $n$-gon.\\ 
(ii) If $B\cap C=A$ and $B\cup C\leq_o\G$ for some generalized $n$-gon $\G$, then $B\cup C\cong B\otimes_A C$ and  $\langle B\cup C\rangle_\G\cong B\oplus_A C$.
\end{remark}

The following is as in \cite{veryhomogeneous, forest} and \cite{HP}:

\begin{prop}\label{calK}
Let $\calK$ be the class of finite connected open partial $n$-gons. Then $(\calK,\leq_o)$
satisfies
\begin{enumerate}
\item (Amalgamation) if $A, B_1, B_2\in\calK$ such that $\iota_i:A\longrightarrow B_i$ and $\iota_i(A)\leq_o B_i, i=1, 2,$ then there exit  $C\in\calK$ and $\kappa_i: B_i\longrightarrow C, i=1, 2$ such that $\kappa_i(B_i)\leq_oC, i=1,2,$ and $\kappa_1(\iota_1(a))=\kappa_2(\iota_2(a))$ for all $a\in A$.
\item (Joint Embedding) for any two graphs $A, B\in\calK$ there is some $C\in\calK$ such that $A, B$ can be strongly embedded (in the sense of $\leq_o$) into $C$.
\end{enumerate}
Hence the limit  $\Gamma_\calK$ exists and is an open generalized $n$-gon. 
\end{prop}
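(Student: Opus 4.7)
The plan is to verify amalgamation and joint embedding for $(\calK,\leq_o)$ and then invoke a Fra\"iss\'e-style existence theorem for classes with strong embeddings, analogously to the constructions in \cite{veryhomogeneous, forest, HP}. For \emph{joint embedding}, given $A, B \in \calK$, I would link $A$ and $B$ by a fresh path of length at least $n+3$ between a vertex of $A$ and one of $B$ (respecting the bipartition). The resulting $C$ is a finite connected open partial $n$-gon, and $A, B \leq_o C$ because the path's internal vertices contribute only loose ends and clean arcs over $A \cup B$.

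The substantive step is \emph{amalgamation}. Given $A \leq_o B_i$ in $\calK$ for $i=1,2$, Lemma~\ref{lem:finite is constructible} provides HF-constructions of each $B_i$ over $A$. I would build $C$ by starting with $C := A$ and interleaving the two HF-sequences. A loose-end step simply adds a fresh vertex with its single edge. A clean-arc step between $u,v$ requires a girth/parity analysis: bipartition forces $d_C(u,v) \equiv n+1 \pmod 2$, and any $C$-path of length $<n-1$ combined with the length-$(n+1)$ path from the partial $B_j$-construction would produce a cycle of length $<2n$, violating the girth bound. Hence $d_C(u,v) \in \{n-1,n+1\}$. If $d_C(u,v)=n+1$, I add a fresh clean arc, creating a new $2n$-cycle. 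If $d_C(u,v)=n-1$, a uniqueness argument (two distinct length-$(n-1)$ paths between $u,v$ would produce a $(2n-2)$-cycle) provides a single pre-existing length-$(n-1)$ path between $u,v$ in $C$; I identify the new arc's internal vertices with its internal vertices in the orientation forced by the $u,v$-incidences.

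I would then check that $C$ is a finite connected partial $n$-gon of girth $\geq 2n$ by construction; that $C$ is open, since an induction on HF-steps shows openness is preserved under loose-end and clean-arc extensions (the top-indexed added vertex in any finite subgraph provides either a loose end or completes a clean arc); and that each map $B_i \hookrightarrow C$ is an induced-subgraph embedding (the girth bound rules out spurious edges between identified internal vertices), which is moreover strong because, reading the construction in reverse, $C$ is HF-constructible from $\iota_i(B_i)$ by performing the remaining HF-steps of $B_{3-i}$. Together with the countability of $\calK/{\cong}$, amalgamation and joint embedding yield the Fra\"iss\'e limit $\Gamma_\calK$, which is a countable open partial $n$-gon (every finite subgraph lies in some $C_j \in \calK$, hence is open). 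Genericity gives diameter $n$: any two vertices at distance $k>n$ lie on a geodesic with a pair $(x',y')$ at distance $n+1$ on it, and the strong extension adding a clean arc between $(x',y')$ reduces the distance by $2$; iterating through the generic chain brings the distance down to $\leq n$, while a $2n$-cycle realized in the limit provides a pair at distance exactly $n$. Thickness follows since attaching a fresh loose end to any vertex is always a strong extension, forcing infinite valency at every vertex.

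The main obstacle is verifying the identification step: one must check that the orientation of the identification is forced (by the incidences at $u,v$), that no spurious edges are introduced into the induced-subgraph image of $B_j$ (ultimately because internal vertices of a clean arc have no $A$-neighbours outside the arc endpoints, and this property is preserved by subsequent HF-steps), and that $B_i \leq_o C$ survives. All three points reduce to the girth/parity analysis and the HF-re-reading of $C$ over $\iota_i(B_i)$.
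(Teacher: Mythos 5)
Your argument is correct and follows essentially the same route as the paper: reduce amalgamation to single hyper-free steps (via Lemma~\ref{lem:finite is constructible}), take the free amalgam when the clean arc or loose end is not already present in the other side, and identify with the (unique, by the girth bound) pre-existing path otherwise --- your girth/parity analysis just makes explicit what the paper compresses into ``if $C$ does not contain a copy of $B$ over $A$, then $B\otimes_A C\in\calK$''. The only differences are cosmetic: you obtain joint embedding by a fresh connecting path rather than by amalgamating over $\emptyset\in\calK$ (which actually handles the connectivity requirement more cleanly), and you spell out why the limit is an open generalized $n$-gon, which the paper merely asserts.
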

\begin{proof}
Since $\emptyset\in\calK$, it suffices to verify the amalgamation property. Inductively we may assume that $B$ is a minimal hyperfree extension of $A$, so either a clean arc  or a loose end. If $C$ does not contain a copy of $B$ over $A$, then $B\otimes_AC\in\calK$ and this is enough.
\end{proof}

Note that the class $(\calK,\leq_o)$ is unbounded in the sense that for any
$A\in \calK$ there exists some $B\in\calK$ with $A\neq B$ and $A\leq_oB$.

\begin{df}\label{K-sat}
Let $M\models T_n$. Then we say that $M$ is $\calK$-saturated if for all finite sets $A, B\in\calK$ with $A\leq_o B$ and any copy $A'$ of $A$ strongly embedded into $ M$ there is a strong embedding of $B$ over $A'$ into $M$.
\end{df}

Note that by construction, $\G_\calK$ is $\calK$-saturated and that (as in any such Hrushovski construction) every $\calK$-saturated structure is $\calK$-homogeneous in the sense that any partial automorphism between strongly embedded substructures extends to an automorphism.

\begin{thm}\label{complete theory}
For any $n\geq 3$, the theory $T_n$ of open generalized $n$-gons is complete and hence decidable.
\end{thm}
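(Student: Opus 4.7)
The plan is to establish completeness via a Fraïssé-style back-and-forth argument between $\calK$-saturated models of $T_n$, leveraging the limit $\G_\calK$ constructed in Proposition~\ref{calK}. Once completeness is proved, decidability will follow automatically, since $T_n$ is recursively axiomatizable (by a $\forall\exists$ schema that is itself decidable) and any recursively axiomatized complete theory is decidable.

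The first step is to axiomatize $\calK$-saturation by a first-order schema over $T_n$. For each isomorphism type of a strong extension $A \leq_o B$ in $\calK$, I introduce a sentence $\phi_{A,B}$ asserting that every strongly embedded copy of $A$ in the ambient model extends to a strongly embedded copy of $B$. The predicate ``$A \leq_o M$'' for a fixed finite $A$ needs to be shown first-order expressible in terms of the local structure of $A$ in $M$, since being closed over $A$ imposes local (valency and clean-arc) conditions that can be captured over finitely many parameters in open $n$-gons. Let $T_n^*$ denote $T_n$ together with all such $\phi_{A,B}$. Then $\G_\calK \models T_n^*$, and by compactness combined with amalgamation in $\calK$, any $\aleph_1$-saturated elementary extension of a model of $T_n$ realizes every instance of the schema; hence each model of $T_n$ is elementarily equivalent to some model of $T_n^*$.

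Next I would show $T_n^*$ complete via back-and-forth between two models $M_1, M_2 \models T_n^*$. Starting from $\emptyset \leq_o M_i$, given a partial isomorphism $f : A_1 \to A_2$ between finite $A_i \leq_o M_i$ and an element $a \in M_1$, form the closure $A_1' := \hat{A_1 \cup \{a\}}_{M_1}$; finiteness of this closure in open $n$-gons is a key auxiliary fact that should follow from Remark~\ref{rem:union of closed sets is closed} and openness of $M_1$, since any candidate closed subset must contribute a loose end or clean arc that is prohibited from lying in the closed part. Then $A_1 \leq_o A_1'$ lies in $\calK$, and by amalgamation in $\calK$ together with $\calK$-saturation of $M_2$ one extends $f$ to a partial isomorphism $A_1' \to A_2' \leq_o M_2$. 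Alternating sides yields an $L_{\infty\omega}$-equivalence between $M_1$ and $M_2$, hence $M_1 \equiv M_2$; this gives completeness of $T_n^*$, and composing with the extension step above yields completeness of $T_n$.

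The main obstacle I anticipate is twofold: first-order axiomatizability of the relation ``$A \leq_o M$'' (i.e.\ showing that openness over a finite parameter set can be witnessed locally) and finiteness of hat-closures of finite sets in open $n$-gons. Both should follow from a careful but elementary analysis using the material of Section~2, and once they are in place the back-and-forth proceeds as in classical Fraïssé-Hrushovski constructions.
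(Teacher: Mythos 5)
Your overall architecture---reduce to $\calK$-saturated models and then run a back-and-forth---is the same as the paper's, but the route you propose for reaching $\calK$-saturation contains a genuine gap at exactly the point you flag as the ``main obstacle''. To make $T_n^*$ a first-order theory you need the predicate ``$A\leq_o M$'' (for a finite tuple $A$) to be first-order expressible. By Definition~\ref{df:closed} this predicate asserts that \emph{no} finite subset of $M$ is closed over $A$: a countable conjunction of conditions, one for each possible size of the offending closed set. For this to collapse to a single formula you would need a uniform bound, over all models of $T_n$, on the size of minimal closed sets over a set of $|A|$ vertices. No such bound is available: nothing local about $A$ controls how large a closed configuration over $A$ can be (the non-superstability construction produces ever larger closed configurations over a fixed finite set, and the closure operator $\Cl$ needs $\omega$ many alternating steps precisely because closedness is not witnessed in bounded size). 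So the $\phi_{A,B}$ are not sentences, $T_n^*$ is not a first-order theory, and the step ``each model of $T_n$ is elementarily equivalent to some model of $T_n^*$'' does not make sense as stated. The paper avoids this deliberately: it passes to an $\omega$-saturated $M\models T_n$ and realizes the condition ``there is no finite set closed over $A\cup\{b\}$'' as a \emph{type} (consistent because it is realized in $\G_\calK$), thereby showing that every $\omega$-saturated model is $\calK$-saturated in the sense of Definition~\ref{K-sat} without ever claiming that $\leq_o$ is definable. If you replace your schema by this move, the remainder of your argument is essentially the paper's proof.

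A second, smaller issue of the same nature: in the back-and-forth you take $A_1'=\hat{A_1\cup\{a\}}_{M_1}$ and need it to be finite and in $\calK$. Remark~\ref{rem:union of closed sets is closed} only yields that unions of \emph{finitely many} closed sets are closed; the union of all finite sets closed over a finite set need not be finite, again because minimal closed sets are not of bounded size. The paper's back-and-forth instead extends strong partial maps using $\calK$-saturation one hyper-free step at a time (and, where a closure is needed as in the proof of Theorem~\ref{thm:acl is elem}, settles for countable strong subsets). Your final observation that decidability follows from completeness together with recursive axiomatizability is correct and is exactly how the paper obtains it.
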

\begin{proof}
Let $M\models T_n$. It suffices to show that $M$ is elementarily equivalent to $\G_\calK$. Clearly we may assume that $M$ is $\omega$-saturated and we claim that any $\omega$-saturated
$M$ is  $\calK$-saturated: let $A\leq_o B$ be from $\calK$ and assume that $A\leq_o M$ (via some strong embedding).  We have to show that we can find  an embedding $B'$ of $B$ into $M$ such that there does not exist a finite set closed over $B'$ in $M$. This is clear if $B$ is an extension of $A$ by a clean arc since such paths are unique. If $B$ is an extension of $A$ by a loose end $b$, then the type of $b$ over $A$ expressing that there is no finite set $D$ closed over $A\cup\{b\}$ is realized in $\G_\calK$, hence it is consistent and therefore realized in $M$ by $\omega$-saturation.
Now both $M$ and  $\G_\calK$ are $\calK$-saturated from which it follows (by standard back-and-forth) that they are partially isomorphic and hence elementarily equivalent.
\end{proof}

We say that a set $B$ neighbours  a set $A$ if every $a\in A$ has a neighbour in $B\setminus A$.

\begin{lem}\label{lem:closed is algebraic}
Let $M\models T_n$, $A\subset M$ finite. Then $M$ does not contain three disjoint sets $B_1, B_2, B_3$ each closed over $A$ and neighbouring $A$. In particular, if $B$ is closed over $A$, then $B$ is algebraic over $A$.
\end{lem}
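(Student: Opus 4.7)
My plan is to handle the two assertions separately: the first by direct case analysis on the finite subgraph $C := A \cup B_1 \cup B_2 \cup B_3$ using openness of $M$, and the ``in particular'' by induction on $|B|$ combined with a pigeonhole argument on $A$-conjugates.

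For the first assertion I would suppose such $B_1, B_2, B_3$ exist and apply openness of $M$ to $C$, which must contain a loose end $v$ or a clean arc $(y_1,\ldots,y_{n-2})$. If $v \in A$, the neighbouring hypothesis gives $v$ three distinct neighbours (one in each $B_i \setminus A$), contradicting $v$ having $C$-valency at most $1$; if $v \in B_i$, then $B_i$ being closed over $A$ forces $v$ to have valency at least $2$ in $A \cup B_i \subseteq C$, again a contradiction. For the clean arc, the same neighbouring argument forbids any interior $y_i$ from lying in $A$, so $y_i \in B_{j_i}$ for some $j_i$; closedness of $B_{j_i}$ then forces both $C$-neighbours of $y_i$ to lie in $A \cup B_{j_i}$, and propagating along the path shows all $j_i$ equal a common $j$. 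The endpoints $y_0, y_{n-1}$, being $C$-neighbours of $y_1, y_{n-2} \in B_j$, also lie in $A \cup B_j$, so $(y_1,\ldots,y_{n-2})$ is a clean arc of $B_j$ with endpoints in $A \cup B_j$, contradicting closedness of $B_j$ over $A$.

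For the ``in particular'' I would induct on $|B|$; the case $B = \emptyset$ is trivial. Otherwise, fix $b \in B$ and assume toward contradiction that $b \notin \acl(A)$, so in a sufficiently saturated extension there are infinitely many distinct $A$-conjugates $b_j = \sigma_j(b)$, giving finite sets $B_j := \sigma_j(B)$ each closed over $A$ and each of size $|B|$. The key dichotomy is: either (i) three of the $B_j$'s are pairwise disjoint, or (ii) some $c \in M$ lies in infinitely many $B_j$'s (a straightforward pigeonhole after fixing $B_1$ and examining how the other $B_j$'s meet $B_1$, or $B_1 \cup B_2$). In case (i), I would restrict to $A_0 := \{a \in A : a \text{ has a neighbour in } B\}$; openness of $M$ forces $A_0 \neq \emptyset$ (otherwise $B$ itself would be a nonempty finite subgraph with no loose ends or clean arcs). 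Because no element of $B$, hence of any $B_j$, has a neighbour in $A \setminus A_0$, closedness over $A$ transfers to closedness over $A_0$, each $B_j$ still neighbours $A_0$, and the first assertion applied over $A_0$ yields the contradiction. In case (ii), a further pigeonhole on the finite $B$ produces $c' \in B$ with $\sigma_j(c') = c$ for infinitely many $j$; necessarily $c' \neq b$, since otherwise the $b_j$ would not be pairwise distinct. The compositions $\sigma_{j_0}^{-1}\sigma_j$ (for some fixed $j_0$ from this infinite set) then fix $A \cup \{c'\}$ pointwise but send $b$ to infinitely many distinct elements, so $b \notin \acl(A \cup \{c'\})$. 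Since $B \setminus \{c'\}$ is easily seen to be closed over $A \cup \{c'\}$ (the ambient graph $A \cup B$ is unchanged), the induction hypothesis gives $b \in \acl(A \cup \{c'\})$, a contradiction.

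The most delicate step will be verifying in case (i) that closedness over $A$ really passes to closedness over the possibly smaller $A_0$: I would need to check that shrinking $A$ to $A_0$ creates neither new loose ends in $B_j$ nor new clean arcs with endpoints in $A_0 \cup B_j$, and this hinges on the defining property of $A_0$ (namely, that no $B_j$-vertex has any neighbour in $A \setminus A_0$, so valencies of $B_j$-vertices are unaffected by the reduction). Once that bookkeeping is clean, the rest reduces to pigeonhole and the first assertion.
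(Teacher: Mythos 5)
Your proof is correct and follows essentially the same strategy as the paper's: derive a contradiction with openness from the graph $C=A\cup B_1\cup B_2\cup B_3$, in which every vertex of $A$ has valency at least $3$, and reduce the algebraicity claim to the first assertion by producing three disjoint $A$-conjugates of a closed set. Your write-up is in fact more careful than the paper's two-line argument: the propagation along the clean arc, the disjointness-versus-common-vertex dichotomy (which the paper compresses into a minimality assumption on $B$), and the restriction to $A_0$ so that the neighbouring hypothesis is actually met are all details the paper leaves implicit.
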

\begin{proof}
Consider $C=A\cup B_1\cup B_2\cup B_3\subset M$. Then every vertex in $A$ has valency at least~$3$  in $C$ and $C$ contains no clean arc. It follows that $C$ is not open, contradicting $M\models T_n$.

Now suppose $B$ is minimally closed over $A$ and not algebraic over $A$ with $|B\setminus A|$ minimal. Since $B$ is not algebraic over $A$, we find disjoint copies $B_1, B_2, B_3$  of $B$ over $A$, contradicting the first part of the lemma.
\end{proof}
Lemma~\ref{lem:closed is algebraic} directly implies:

\begin{cor}\label{cor:acl implies HF}
If $M\models T_n$ and $A\subseteq M$, then $\acl(A)_M\leq_o M$.
\end{cor}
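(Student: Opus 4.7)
The plan is to derive the corollary directly from Lemma~\ref{lem:closed is algebraic} together with the idempotence of algebraic closure. Unwinding Definition~\ref{df:closed}, the assertion $\acl(A)_M\leq_o M$ says that $M\setminus\acl(A)_M$ is open over $\acl(A)_M$, i.e.\ it contains no finite subset that is closed over $\acl(A)_M$.

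First I would fix a putative counterexample: a finite set $B\subseteq M\setminus \acl(A)_M$ that is closed over $\acl(A)_M$. Applying Lemma~\ref{lem:closed is algebraic} with the parameter set $\acl(A)_M$ (or, more carefully, with a suitable finite subset of $\acl(A)_M$ over which $B$ is still closed, since being closed only involves finitely many incidences with the parameter set), every vertex of $B$ is then algebraic over $\acl(A)_M$. But $\acl(\acl(A)_M)=\acl(A)_M$, so $B\subseteq\acl(A)_M$, contradicting $B\subseteq M\setminus\acl(A)_M$.

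The only genuine subtlety — which is worth spelling out to keep the argument inside the hypotheses of Lemma~\ref{lem:closed is algebraic}, which was stated for finite parameter sets — is the reduction to a finite base. Since $B$ is finite and the property of being closed over a set depends only on the neighbours of $B$ among the parameters and on the valencies of vertices in $A\cup B$, there is a finite $A'\subseteq\acl(A)_M$ such that $B$ is already closed over $A'$. Lemma~\ref{lem:closed is algebraic} then yields $B\subseteq\acl(A')\subseteq\acl(A)_M$, giving the same contradiction. No step looks like a serious obstacle; the main point is simply to check that "closed" is witnessed by finitely many parameters so that Lemma~\ref{lem:closed is algebraic} applies.
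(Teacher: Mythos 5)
Your argument is correct and is exactly the intended unwinding of Lemma~\ref{lem:closed is algebraic}: the paper gives no further proof, stating only that the lemma ``directly implies'' the corollary. The reduction to a finite parameter set $A'\subseteq\acl(A)_M$ (possible because being closed is witnessed by finitely many incidences, and valencies only increase when passing back to the larger base) together with idempotence of $\acl$ is the right way to fill in the details, and your treatment of it is sound.
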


\begin{df} Let $B\models T_n$ and $A$ a subgraph of $B$. 
We put
 \begin{enumerate}
 \item $\Cl_0(A)_B=A$;
 \item $\Cl_{i+1}(A)_B=\hat{\langle \Cl(A)_i \rangle_B}$ (see Definition~\ref{df:closed});
 \item  $\Cl_B(A)=\bigcup_{i<\omega}\Cl_i(A)_B$.
 \end{enumerate} 
\end{df}

In other words, $\Cl(A)_B$ is the limit  obtained from alternating
 between adding all closed finite subsets, and completing the partial $n$-gons in $B$.
 
\begin{remark} \label{closure HF} For any subset $A$ of $B\models T_n$ we have  and $\Cl_B(A) \leq_o B$ and by Lemma~\ref{lem:closed is algebraic} $\Cl_B(A)\subseteq \acl_B(A)$.
\end{remark}

\begin{thm}\label{thm:acl is elem}
 Let $A, B\models T_n$ and $A\subseteq B$. The following are equivalent:
 \begin{enumerate}
 \item $A=\acl_B(A)$;
 \item $A=\Cl_B(A)$;
 \item $A\leq_o B$;
 \item $A\elemsub B$.
 \end{enumerate}
\end{thm}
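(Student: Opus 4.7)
I plan to prove the chain (i)$\Rightarrow$(ii)$\Rightarrow$(iii)$\Rightarrow$(iv)$\Rightarrow$(i), of which only (iii)$\Rightarrow$(iv) is substantial. For (i)$\Rightarrow$(ii): $\Cl_B(A)\subseteq\acl_B(A)$ by Remark~\ref{closure HF}, so if $A=\acl_B(A)$ then $A=\Cl_B(A)$. For (ii)$\Rightarrow$(iii): $\Cl_B(A)\leq_o B$ by the same remark, so $A=\Cl_B(A)$ immediately gives $A\leq_o B$. For (iv)$\Rightarrow$(i): any $b\in\acl_B(A)$ satisfies an algebraic formula over some $\bar a\in A$ whose finitely many $B$-solutions, by elementarity, all already lie in $A$, forcing $b\in A$.

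For (iii)$\Rightarrow$(iv) I would take $\kappa$-saturated elementary extensions $A^*\succcurlyeq A$ and $B^*\succcurlyeq B$ for $\kappa>|A|+|B|$; by the argument in the proof of Theorem~\ref{complete theory}, both are $\calK$-saturated. The already-established direction (iv)$\Rightarrow$(iii) applied to $A\preccurlyeq A^*$ and $B\preccurlyeq B^*$ yields $A\leq_o A^*$ and $B\leq_o B^*$. Combining $A\leq_o B$ with $B\preccurlyeq B^*$ gives $A\leq_o B^*$: any finite nonempty $D\subseteq B^*$ closed over $A$ is algebraic over $A$ by Lemma~\ref{lem:closed is algebraic}, and all $B^*$-solutions of any algebraic formula with parameters in $A\subseteq B$ already lie in $B$ by elementarity, whence $D\subseteq B$, contradicting $A\leq_o B$. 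I then run a back-and-forth between $A^*$ and $B^*$ to produce an isomorphism $\tau\colon A^*\to B^*$ fixing $A$ pointwise: starting from $\id_A$, one maintains a partial isomorphism $\sigma\colon X\to Y$ with $A\subseteq X\leq_o A^*$, $A\subseteq Y\leq_o B^*$ and $|X|,|Y|<\kappa$. To absorb a new $x\in A^*$, replace $X$ by $\Cl_{A^*}(X\cup\{x\})$, which is still of cardinality $<\kappa$ because $\Cl\subseteq\acl$, decompose the strong extension $X\leq_o \Cl_{A^*}(X\cup\{x\})$ as a transfinite sequence of minimal hyper-free extensions via Lemma~\ref{lem:finite is constructible}, and realize each one strongly in $B^*$ over the current image using $\calK$-saturation; the symmetric step handles elements of $B^*$. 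The resulting isomorphism yields $A^*\equiv_A B^*$, and the sandwich $A\preccurlyeq A^*\equiv_A B^*\succcurlyeq B$ with $A\subseteq B$ implies $A\preccurlyeq B$.

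The main obstacle is the back-and-forth step. Although $\calK$-saturation is stated only for finite strong extensions, each minimal hyper-free step involves only finitely many neighbours of the current image, so $\calK$-saturation applies locally; the role of $A\leq_o B^*$ is to ensure that each freshly realized minimal strong extension remains strong in $B^*$. The verification at successor stages (both for loose ends and for clean arcs) and at limit ordinals is routine once this is set up, and this is the only genuinely technical content of the proof.
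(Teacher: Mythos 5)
Your proof is correct and rests on the same two pillars as the paper's: the easy implications via Lemma~\ref{lem:closed is algebraic} and Remark~\ref{closure HF} are identical, and for (iii)$\Rightarrow$(iv) both arguments reduce to the $\calK$-saturation of sufficiently saturated models (extracted from the proof of Theorem~\ref{complete theory}) together with the decomposition of $\leq_o$-extensions into hyper-free steps. The packaging differs: the paper replaces $A$ and $B$ by $\omega$-saturated models outright and then applies Tarski's test, moving a single countable strong extension $A_0\leq_o B_0$ into $A$; you instead keep $A$ and $B$ fixed, pass to saturated elementary extensions $A^*$, $B^*$, verify that $A\leq_o B$ transfers to $A\leq_o B^*$ via algebraicity of closed sets, and run a full back-and-forth over $A$ to get $A^*\equiv_A B^*$, concluding by the sandwich. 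Your version buys a cleaner justification of the "we may assume $A,B$ are saturated" step, which the paper leaves implicit (one must check that the hypotheses $A\subseteq B$ and $A\leq_o B$ survive the passage to elementary extensions), at the cost of a longer back-and-forth; for the final conclusion note that a back-and-forth \emph{system} of partial isomorphisms over $A$ already gives $A^*\equiv_A B^*$, so you need not produce an actual isomorphism. Both proofs share the same deferred technical debt: $\calK$-saturation is stated only for finite bases and finite extensions, so realizing a countable $\leq_o$-extension step by step over an infinite base requires $\kappa$-saturation plus an HF-ordering of infinite open extensions (which the paper asserts exists, citing \cite{HP}, without proof), and your acknowledgement of this matches the paper's own level of detail.
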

\begin{proof}
(i) implies (ii) by Lemma~\ref{lem:closed is algebraic}.\\
(ii) implies (iii) by Remark~\ref{closure HF}.\\
(iii) implies (iv): By taking appropriate elementary extensions we may assume that $A, B$ are $\omega_0$-saturated and hence $\calK$-saturated by the proof of Theorem~\ref{complete theory}. We use Tarski's Test:  Let $B\models\exists x\phi(x,\bar{a})$ for some tuple $\bar{a}\subset A$ and let $b\in B$ such that $B\models\phi(b,\bar{a})$. We find a countable set $A_0$ containing $\bar{a}$ such that $A_0\leq_o A$ and similarly we find a countable set $B_0$ containing $A_0\cup\{b\}$ such that $A_0\leq_o B_0\leq_o B$. Thus by $\calK$-saturation we can embed $B_0$ over $A_0$ into $A$.\\
(iv) implies (i) by Tarski's Test.
\end{proof}

Thus we have

\begin{cor}\label{cor:elem chain} For  $n+3\leq k\leq m\leq\omega$  we have $\G_k\elemsub\G_m$, i.e. the free generalized $n$-gons $\G^k$ form an elementary chain.
\end{cor}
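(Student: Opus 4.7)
The plan is to invoke the equivalence of conditions (iii) and (iv) in Theorem~\ref{thm:acl is elem}, which reduces $\G^k \elemsub \G^m$ to the openness condition $\G^k \leq_o \G^m$ under the natural embedding. This embedding arises from $\gamma_k \subseteq \gamma_m \subseteq \G^m$, with $\gamma_k$ isometric in $\gamma_m$ and hence in $\G^m$ by Remark~\ref{rem:short paths are unique}, together with $\langle \gamma_k \rangle_{\G^m} \cong F(\gamma_k) = \G^k$ via Lemma~\ref{lem:finite is constructible}.

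I would then establish that $\G^m$ is HF-constructible from $\G^k$. Starting with $\G^k$, attach $x_{k+1}, \ldots, x_m$ as successive loose ends (each attached to its predecessor, which has infinite valency by Remark~\ref{rem:short paths are unique}), obtaining the partial $n$-gon $\G^k \cup \gamma_m$; then perform the free $n$-completion, each stage of which is a single clean-arc addition, i.e., a hyperfree minimal extension. The result is an open generalized $n$-gon containing $\gamma_m$, and I would identify it with $\G^m = F(\gamma_m)$ by using Lemma~\ref{lem:upward closure}(i), which ensures that the sub-construction generating $\G^k$ inside $\G^m$ remains isolated from the vertices in $\gamma_m \setminus \gamma_k$, so that the two orderings of the free completion coincide.

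Granted HF-constructibility, openness follows by case analysis on an arbitrary finite $B \subseteq \G^m \setminus \G^k$: examine the latest HF-stage $i$ introducing an element of $B$. If stage $i$ is a loose-end extension by $b \in B$, then in $\G^k \cup B$ the vertex $b$ has only its single original neighbor (later clean arcs incident to $b$ contribute fresh internal vertices outside $B$, hence outside $\G^k \cup B$), so $b$ is a loose end over $\G^k$. If stage $i$ is a clean-arc extension $(b_1, \ldots, b_{n-2})$ with endpoints $y_1, y_2$ meeting $B$, then either all $b_i$ belong to $B$ and $y_1, y_2 \in \G^k \cup B$, yielding a clean arc in $B$ with endpoints in $\G^k \cup B$, or else some outermost $b_j \in B$ has a neighbour (an adjacent $b_{j\pm 1}$ or an endpoint $y_{1,2}$) lying outside $\G^k \cup B$, so that $b_j$ is a loose end. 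Either way $B$ is not closed over $\G^k$. The case $m = \omega$ follows by passing to the direct limit of the $\leq_o$-embeddings $\G^k \leq_o \G^\ell$ for $k \leq \ell < \omega$ (or equivalently by Tarski's chain criterion).

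The principal obstacle is the identification $F(\G^k \cup \gamma_m) \cong \G^m$ in the middle step: one must verify that first generating $\G^k$ from $\gamma_k$ inside $\G^m$ and then attaching the tail $x_{k+1}, \ldots, x_m$ and completing yields the same graph as the direct free completion of $\gamma_m$. This amounts to a delicate induction on the stages of the completion, using Lemma~\ref{lem:upward closure}(i) to certify that the $\gamma_k$-generated sub-construction stays separated from the tail at every step, so that the distance-$(n+1)$ pairs arising in the two orderings can be matched.
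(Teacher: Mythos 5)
Your proof takes the same route as the paper, which presents the corollary as an immediate consequence of Theorem~\ref{thm:acl is elem}: the entire content is that the natural copy of $\G^k$ in $\G^m$ satisfies $\G^k\leq_o\G^m$, and your verification of this via HF-constructibility, together with the case analysis on the last construction stage meeting a putative closed set, is sound. Two points deserve correction, though neither is load-bearing. First, $\gamma_k$ is \emph{not} isometrically embedded in $\G^m$: distances in a generalized $n$-gon are at most $n$, whereas $\gamma_k$ has length $k\geq n+3$. What you actually need for the identification $\langle\gamma_k\rangle_{\G^m}\cong F(\gamma_k)$ is either $\gamma_k\leq_o\G^m$ (for Lemma~\ref{lem:finite is constructible}) or, more cleanly, $\gamma_k\leq_n\G^m$ together with Lemma~\ref{n-strong}. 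Second, the step you flag as the ``principal obstacle'' --- that completing $\gamma_k$ first and then attaching and completing the tail yields $\G^m$ --- requires no delicate induction on stages: $\G^k\cup\gamma_m$ arises from $\gamma_m$ by successively attaching clean arcs, so by Remark~\ref{arches n-strong} it is $n$-strong in $\G^m$, and since it generates $\G^m$, Lemma~\ref{n-strong} gives $\G^m=F(\G^k\cup\gamma_m)$ at once. (Note also that Lemma~\ref{lem:upward closure}(i) does not literally apply to $\gamma_k\subseteq\Gamma_0$, since that statement concerns subsets disjoint from the earlier stages; the separation fact you want is true but needs the variant argument just indicated.)
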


The following lemma will be used in the proof of Theorem~\ref{thm:stable}:

\begin{lem}\label{lem:local char}
Let $M\models T_n$ and $A, C\subseteq M$, $A$ finite and $C$ algebraically closed. Then there exist $a\in A$ and $B_A=\{b_1, b_2\}\subset D_1(a)$ such that for any set $B$ closed over $C\cup A$  and neighbouring  $A$ we have $B\cap B_A\neq \emptyset$.
\end{lem}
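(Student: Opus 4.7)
My plan is to argue by contradiction using a natural extension of Lemma~\ref{lem:closed is algebraic}. First I would observe that the proof of Lemma~\ref{lem:closed is algebraic} goes through verbatim with $A$ replaced by $C\cup A$ (still requiring that the sets $B_i$ neighbour $A$ itself): three pairwise disjoint closed sets $B_1,B_2,B_3$ over $C\cup A$ each neighbouring $A$ would, together with $A$, form a finite subgraph $A\cup B_1\cup B_2\cup B_3$ of $M$ in which every vertex of $A$ has valency at least three (one neighbour in each $B_i$) while every vertex of each $B_i$ retains its $\geq 2$ neighbours in $C\cup A\cup B_i$. This finite subgraph has no loose end, contradicting the openness of $M$; hence at most two such pairwise disjoint sets exist in $M$.

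Assume now that the conclusion of the lemma fails. Fix any $a_0\in A$ and pick an arbitrary $B^1$ in the family with some witness $b_1\in B^1\cap D_1(a_0)\setminus A$. Applying the failure hypothesis to the (possibly degenerate) pair $\{b_1,b_1\}$ yields $B^2$ in the family with $b_1\notin B^2$; it contains a witness $b_2\in B^2\cap D_1(a_0)\setminus(A\cup\{b_1\})$. Applying the hypothesis again to $\{b_1,b_2\}$ produces $B^3$ with $b_1,b_2\notin B^3$ and a witness $b_3\in B^3\cap D_1(a_0)\setminus(A\cup\{b_1,b_2\})$. We thus obtain three members $B^1,B^2,B^3$ of the family whose traces on $D_1(a_0)$ contain three distinct elements.

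The main obstacle is to upgrade the $B^i$ to three \emph{pairwise disjoint} closed sets neighbouring $A$, which would yield the desired contradiction. I would attempt this by choosing each $B^i$ minimal by inclusion among closed sets over $C\cup A$ that neighbour $A$ and contain $b_i$ but avoid the other $b_j$: in such a minimal set every vertex is essential, in the sense that its removal either creates a loose end (violating closedness) or leaves some $a\in A$ without a neighbour in $B^i$ (violating the neighbouring property). Combined with Remark~\ref{rem:union of closed sets is closed}, an overlap $B^i\cap B^j\neq\emptyset$ between two such minimal choices should, after taking unions and pruning, force some $b_j$ to lie in $B^i$, contradicting the construction. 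Making this minimality and overlap analysis precise---ruling out intersections between the chosen minimal sets---is the delicate point; once carried out, it produces three pairwise disjoint members of the family and the required contradiction with the extended form of Lemma~\ref{lem:closed is algebraic}.
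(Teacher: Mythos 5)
Your overall shape (negate the statement, extract three distinct neighbours of a vertex of $A$ from three closed sets, and contradict a ``no three closed sets'' principle) is close to the paper's, but there are two genuine problems. First, your extension of Lemma~\ref{lem:closed is algebraic} is mis-justified. A set $B_i$ closed over $C\cup A$ only guarantees that its vertices have valency $\geq 2$ in $C\cup A\cup B_i$; a vertex of $B_i$ may have one of its two neighbours in $C$, so the finite graph $A\cup B_1\cup B_2\cup B_3$ can perfectly well contain loose ends, and no contradiction with the openness of $M$ arises. Relatedly, you never use the hypothesis that $C$ is algebraically closed, which is essential. The correct contradiction is that $(A\cup B_1\cup B_2\cup B_3)\setminus C$ would be a finite nonempty set closed over $C$, which is impossible precisely because $C=\acl(C)\leq_o M$ (Corollary~\ref{cor:acl implies HF}, or the second part of Lemma~\ref{lem:closed is algebraic}).

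Second, and more seriously, the disjointness upgrade in your last paragraph is a real gap, and it is a detour you do not need. Minimal closed sets containing prescribed witnesses have no reason to be pairwise disjoint, and your sketched ``overlap analysis'' does not rule this out. The paper sidesteps disjointness entirely: by Remark~\ref{rem:union of closed sets is closed} the union of finitely many sets closed over $C\cup A$ is again closed over $C\cup A$, so from the failure of the conclusion (applied to \emph{every} $a\in A$, not just one fixed $a_0$) one obtains a \emph{single} set $B$ closed over $C\cup A$, neighbouring $A$, with $|B\cap D_1(a)|\geq 3$ for all $a\in A$. Then $(A\cup B)\setminus C$ is closed over $C$ --- the $A$-vertices have valency at least $3$, and the $B$-vertices cannot furnish a loose end or a clean arc because $B$ is closed over $C\cup A$ --- contradicting $C=\acl(C)$. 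Replacing your third paragraph by this union argument, and repairing the first paragraph as above, recovers the paper's proof.
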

\begin{proof}
Suppose otherwise. Then by Remark~\ref{rem:union of closed sets is closed} there is a set $B$ closed over $C\cup A$  and neighbouring  $A$ such that for all $a\in A$ we have $|B\cap D_1(a)|\geq 3$. Since $C$ is algebraically closed,  we know that $B\cup A$ is open over $C$, so contains a loose end or a clean arc which is impossible since all $a\in A$ have valency at least $3$ in $B\cup A$.
\end{proof}

Note that $B_A\subset\acl(AC)$ and $B_A$ might be a singleton.

Exactly as in \cite{pseudospace} and \cite{2-ample} we now define the following notion of independence (see also \cite{HP}).

\begin{df} Let $\M$ be the monster model of $T_n$. For arbitrary subsets $A, B, C $ of $\M$ we say that $B$ and $C$ are independent over $A$, $B{\inds}_A C$, if   $\acl(ABC)=\acl(AB)\oplus_{\acl(A)}\acl(AC)$.
\end{df}

Note that $B{\inds}_A C$ implies in particular $\acl(BA)\cup\acl(AC)\cong \acl(BA)\otimes_A\acl(CA)$.

We will show that $T_n$ is stable by establishing that $\inds$ satisfies the required properties of forking  as in \cite{TZ}, Theorem 8.5.10. where in the notation of that theorem, $B{\inds}_A C$ should be read as $\tp(A/C)\sqsubseteq \tp(A/BC)$.

\begin{thm}\label{thm:stable} The theory $T_n$ of open generalized $n$-gons is stable.\\
In $T_n$, the notion $\inds$ satisfies the properties of stable forking, i.e. 
\begin{itemize}
\item(Invariance)$\inds$ is invariant under $\Aut(\mathfrak{M})$.
\item(Local character) For all $A\subseteq \mathfrak{M}$ finite and $C\subseteq \mathfrak{M}$ arbitrary, there is some countable set $C_0\subseteq C$  such that $A{\inds}_{C_0}C.$
\item(Weak Boundedness) For all $B\subseteq \mathfrak{M}$ finite and $A\subseteq\mathfrak{M}$ arbitrary, there is some cardinal $\mu$, such that there are at most $\mu$ isomorphism types of $B'\subseteq \mathfrak{M}$ over $C$ where $B'\cong_A B$ and $B'{\inds}_A C$.
\item (Existence) For all $B\subseteq \mathfrak{M}$ finite and $A\subseteq C\subseteq \mathfrak{M}$ arbitrary, there is some $B'$ such that $\tp(B/A)=tp(B'/A)$ and $B'{\inds}_A C$.
\item(Transitivity) If $B{\inds}_A C$ and $B{\inds}_{AC}D$ then $B{\inds}_ACD$.
\item(Weak Monotonicity) If $B{\inds}_ACD,$ then $B{\inds}_AC$.
\end{itemize}
\end{thm}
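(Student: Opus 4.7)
The plan is to verify the six listed properties of $\inds$ directly from the definition $B\inds_A C \iff \acl(ABC) = \acl(AB) \oplus_{\acl(A)} \acl(AC)$, and then invoke Theorem 8.5.10 of \cite{TZ} to conclude stability of $T_n$. The key tool throughout is Corollary~\ref{cor:acl implies HF}: for any $A$, $\acl(A) \leq_o \M$, so algebraic closures are themselves models of $T_n$, and by Remark~\ref{amalgam HF} the canonical amalgam $\acl(AB) \oplus_{\acl(A)} \acl(AC)$ always makes sense, extending $\oplus$ to possibly infinite algebraic closures via direct limits of its finite approximations (each of which sits inside $\M$ by $\calK$-saturation).

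Invariance is immediate since $\acl$ and $\oplus$ are automorphism-equivariant. For existence, given $A \subseteq C$ and finite $B$, form $M_0 = \acl(AB) \oplus_{\acl(A)} \acl(C)$, which is an open generalized $n$-gon; by $\calK$-saturation of $\M$ and back-and-forth there is an embedding $M_0 \hookrightarrow \M$ fixing $\acl(C)$, and the image $B'$ of $B$ satisfies $\tp(B'/A)=\tp(B/A)$ and $B'\inds_A C$. Weak boundedness follows because any such $B'$ is, up to isomorphism over $C$, determined by $\tp(B/A)$ together with the amalgam data over $\acl(A)$, yielding a bounded number of possibilities. Transitivity is the associativity identity $(\acl(AB) \oplus_{\acl(A)} \acl(AC)) \oplus_{\acl(AC)} \acl(ACD) = \acl(AB) \oplus_{\acl(A)} \acl(ACD)$, which is a formal consequence of associativity of the graph amalgam $\otimes$ and of the fact that free completion commutes with such amalgams when each factor is itself algebraically closed. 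Weak monotonicity uses that restricting the free amalgam $\acl(AB) \oplus_{\acl(A)} \acl(ACD)$ to $\acl(AC)\subseteq\acl(ACD)$ preserves the amalgam structure; Lemma~\ref{lem:upward closure}(i) guarantees that components of $\acl(AB)\setminus\acl(A)$ cannot interact with $\acl(AC)$ except through $\acl(A)$, so $\acl(ABC)=\acl(AB)\oplus_{\acl(A)}\acl(AC)$.

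The main obstacle, and essentially the only step that is not formal manipulation, is local character. Given finite $A$ and arbitrary $C$, one builds $\acl(AC)$ iteratively by alternating $\Cl$-closure and free completion (Remark~\ref{closure HF}). At each stage only finitely many new vertices are produced per finite input: each set closed over a finite base is algebraic over that base by Lemma~\ref{lem:closed is algebraic}, and each clean arc added during a completion step has just two endpoints. Tracking the elements of $C$ that are ever "touched" by the construction starting from $A$, one obtains a \emph{countable} subset $C_0\subseteq C$ containing every vertex of $C$ that appears as an endpoint of a clean arc with the other endpoint in $\acl(A C)\setminus\acl(C)$, or that lies in a finite closed set over such a cone. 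One then verifies, using Lemma~\ref{lem:upward closure}(i)--(ii) to rule out spurious identifications, that $\acl(AC)\cap\acl(C)\subseteq\acl(C_0)$ and $\acl(AC)=\acl(AC_0)\oplus_{\acl(C_0)}\acl(C)$, i.e.\ $A\inds_{C_0} C$. This argument follows the pattern of \cite{pseudospace,2-ample} and the $n=3$ case in \cite{HP}, with the uniform control of clean arcs and opposites from Remark~\ref{rem:opposites} replacing the corresponding planar geometric input.
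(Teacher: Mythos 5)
Your treatment of invariance, existence, transitivity, weak monotonicity and (essentially) weak boundedness matches the paper's proof: the paper likewise realizes $\acl(BA)\oplus_{\acl(A)}\acl(C)$ inside $\M$ for existence, proves boundedness by showing two $A$-conjugate, $A$-independent copies are $C$-conjugate, proves transitivity by associativity of $\oplus$, and proves weak monotonicity via Lemma~\ref{lem:upward closure}. Those parts are fine.

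The gap is exactly where you flag the main difficulty: local character. Your plan is to ``track the elements of $C$ that are ever touched'' and to put into $C_0$ every vertex of $C$ occurring as an endpoint of a clean arc whose other endpoint lies in $\acl(AC)\setminus\acl(C)$, or lying in a finite closed set over such a configuration. Two problems. First, the set you describe need not be countable: a single new vertex of $\acl(AC)\setminus\acl(C)$ can lie at distance $n+1$ from unboundedly many vertices of $\acl(C)$, so the free-completion stage attaches clean arcs to arbitrarily many elements of $C$; collecting all such endpoints defeats the purpose. (Those arcs are harmless for independence precisely because they are freely added; they are not the obstruction.) Second, and more importantly, the real obstruction to $A\inds_{C_0}C$ is the possible existence of finite sets closed over $\acl(AC_0)\cup C$, and a priori there could be a proper class of candidate finite closed sets, each dragging in different elements of $C$. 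What makes the bookkeeping countable in the paper is a combinatorial lemma you do not have: Lemma~\ref{lem:local char} (proved via Remark~\ref{rem:union of closed sets is closed} and openness) shows that for each finite $X$ there is a \emph{fixed} set $B_X$ of at most two neighbours of a single vertex of $X$ which meets \emph{every} set closed over $X\cup C$ and neighbouring $X$. One then only needs, at each stage, a finite $C_X\subseteq C$ with $B_X\subseteq\acl(C_i\cup C_X)$, and the union over all finite $X$ in a countable set and all $i<\omega$ stays countable; the closure argument (any finite $D$ closed over $\acl(A\cup C_\infty)\cup C$ would have to meet some $B_Z$ already absorbed at an earlier stage) then yields $\acl(A\cup C_\infty)\cup C\leq_o\M$ and hence independence via Remark~\ref{amalgam HF}. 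Without an analogue of Lemma~\ref{lem:local char}, your tracking argument neither terminates in a countable set nor rules out a closed set appearing over the limit, so this step needs to be supplied.
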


\begin{proof}\ 
\begin{itemize}
\item(Invariance) Clearly ${\inds}$ is invariant under $\Aut(\mathfrak{M})$.
\item(Local Character) Let $A\subset\mathfrak{M}$ be finite  and $C\subseteq\mathfrak{M}$ arbitrary. We 
construct a countable set $C_\infty\subset C$ such that $\acl(A\cup C_\infty)\cup C\leq_o\M$. Then $B=\acl(A\cup C_\infty)$ is countable and by Remark~\ref{amalgam HF} (ii) we have $A{\inds}_{B}C$. 
By Lemma~\ref{lem:local char} there is a finite set $B_A$ which intersects any set $B$ closed over $A\cup C$ and neighbouring $A$. Let $C_A\subset C$ be finite such that $B_A\subset\acl(A\cup C_A)$ and put $C_0=C_A, B_0=\acl(A\cup C_0)$. Suppose inductively that $B_i, C_i$ have been defined, where $B_i, C_i$ are countable. For a finite subset $X\subset B_i$ let $B_X$ be the finite set intersecting any  set  $D$ closed over $X\cup C$ and neighbouring $X$, and let $C_X\subset C$ be finite such that $B_X\subset \acl(C_i\cup C_X)$.
Put $C_{i+1}=C_i\cup\bigcup \{C_X\ |\ X\subset B_i \mathrm{ \ finite}\}$ and $B_{i+1}=\acl(A\cup C_{i+1})$. Note that $C_{i+1}, B_{i+1}$ are again countable. Finally put $C_\infty=\bigcup_{i<\omega} C_i$. 

We now claim that $\acl(A\cup C_\infty)\cup C\leq_o\M$.
Suppose otherwise and let $D$ be a finite set closed over $\acl(A\cup C_\infty)\cup C$ (in particular, by the definition of being closed, $D\cap(\acl(A\cup C_\infty)\cup C)=\emptyset$). Let $Z$ be the set of neighbours of $D$ in $\acl(A\cup C_\infty)$. Since any element of $D$ has at most one neighbour in  $\acl(A\cup C_\infty)$ by Theorem~\ref{thm:acl is elem}, we have  $|Z|\leq |D|$    and hence $Z\subseteq B_i$ for some $i<\omega$. Then by construction $D$ is closed over $Z\cup C\subseteq \acl(A\cup C_\infty)\cup C $ and neighbours $Z$, so intersects the set $B_Z$ nontrivially. But $B_Z\subset B_{i+1}$ by construction. Since $D$ intersects $B_Z$ nontrivially, this contradicts our assumption $D\cap(\acl(A\cup C_\infty)\cup C)=\emptyset$.

\item(Weak Boundedness) 
Let $B\subseteq \mathfrak{M}$ be finite and $A\subseteq C\subseteq \mathfrak{M}$ be arbitrary. If $B\subset\acl(A)$, the claim is obvious. So assume $A, C$ are algebraically closed and $\tp(B_1/A)=\tp(B_2/A)=\tp(B/A)$, so $\acl(B_1A)\cong\acl(B_2A)$ and  $B_1$ and $B_2$ are isometric over $A$.  Hence  from $B_1, B_2{\inds}_A C$, we have 
\[\acl(B_1AC)\cong \acl(B_1A)\oplus_A C\cong \acl(B_2A)\oplus_A C\cong\acl(B_2AC).\]
In particular we have $B_1C\cong B_1\otimes_AC\cong B_2C\cong B_2A\otimes_AC$ and so  $B_1$ and $B_2$ are isometric over $C$. This isometry extends to an isometry from $\acl(B_1AC)$ to $\acl(B_2AC)$ fixing $C$ and since $\acl(B_1AC)$ and $\acl(B_2C)$ are elementary substructures, this extends to an automorphism of $\M$. Hence $\tp(B_1/C)=\tp(B_2/C)$.

\item(Existence) Let  $B\subseteq \mathfrak{M}$ be finite and $A\subset C\subseteq ~\mathfrak{M}$ arbitrary, and consider $D=\acl(BA)\oplus_{\acl(A)}\acl(C)$. We may assume that $C$ is non-degenerate and algebraically closed so that $C\elemsub \M$  and $C\elemsub D$ by Theorem~\ref{thm:acl is elem}. By saturation and homogeneity we can embed $D$ over $C$ into $\M$ in such a way the image of $D$ is an elementary substructure of $\M$. Hence we find $B'$ with $\tp(B/A)=tp(B'/A)$ and $B'{\inds}_A C$.

\item(Transitivity) Let $B{\inds}_A C$ and $B{\inds}_{AC}D,$ so $$\acl(ABC)=\acl(AB)\oplus_{\acl(A)}\acl(AC),$$
 and
 \begin{align*}
 acl(ABCD)&\cong\acl(ABC)\oplus_{\acl(AC)}\acl(ACD)\\
 &\cong \left( \acl(AB)\oplus_{\acl(A)}\acl(AC)\right) \oplus_{\acl(AC)}\acl(ACD)\\
 &\cong \acl(AB)\oplus_{\acl(A)}\acl(ACD),
 \end{align*}
 so $B{\inds}_ACD.$
 \item(Weak Monotonicity) Let $B{\inds}_ACD$, hence 
 \[\G=\acl(ABCD)\cong\acl(AB)\oplus_{\acl(A)}\acl(ACD).\] 
 Now $\acl(AB)\otimes_{\acl(A)}\acl(AC)$ is isometrically embedded into $\acl(AB)\otimes_{\acl(A)}\acl(ACD)$ and hence  by Lemma~\ref{lem:upward closure} we have
 \begin{align*}
 \langle\acl(AB)\otimes_{\acl(A)}\acl(AC)\rangle_\G & = F(\acl(AB)\otimes_{\acl(A)}\acl(AC))\\
 &=\acl(AB)\oplus_{\acl(A)}\acl(AC).
 \end{align*}

\end{itemize}
\end{proof}

As a corollary of the proof we obtain:

\begin{thm} The theory $T$ of open generalized $n$-gons is not superstable.
\end{thm}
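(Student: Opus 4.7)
My plan is to show that the local-character proof for $\inds$ cannot be strengthened to give a finite base — equivalently, to produce a finite $A$ and a countable $C\subseteq\M$ in the monster such that $A\nind_{C_0}C$ for every finite $C_0\subseteq C$. This contradicts the finite-base version of local character that characterizes superstability, so $T_n$ is not superstable.

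The construction is inductive, using $\calK$-saturation of $\M$. Start with a finite non-degenerate partial $n$-gon $A\subset\M$ (for instance a path of length $n+3$), so that $\acl(A)\models T_n$ by the discussion after Definition~\ref{df:T_n}. At stage $i\geq 0$, having chosen $c_0,\ldots,c_{i-1}$, use $\calK$-saturation together with the existence of hyper-free minimal extensions to pick an element $c_i\in\M$ and a finite set $D_i\subset\acl(A\cup\{c_0,\ldots,c_i\})$ closed over $A\cup\{c_0,\ldots,c_i\}$, neighbouring the previously built closed configuration $D_0\cup\cdots\cup D_{i-1}$, and such that $D_i\not\subset\acl(A\cup\{c_0,\ldots,c_{i-1}\})$. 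Such a step is available because, by the proof of Theorem~\ref{complete theory}, any finite open partial $n$-gon built over $A\cup\{c_0,\ldots,c_{i-1}\}$ with one additional closed feature can be embedded into $\M$; this is precisely the mechanism driving the countable iteration in the Local Character clause of the preceding theorem.

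Set $C=\{c_i:i<\omega\}$. For an arbitrary finite $C_0\subseteq C$ pick $c_j\in C\setminus C_0$. Then $D_j\subseteq\acl(A\cup C)$, but $D_j$ cannot appear in the canonical amalgam $\acl(A\cup C_0)\oplus_{\acl(C_0)}\acl(C)$: by Remark~\ref{amalgam HF} the amalgam is the free completion of $\acl(A\cup C_0)\otimes_{\acl(C_0)}\acl(C)$, in which $c_j$ is glued only through $\acl(C_0)$ and carries no specific interaction with $A\cup D_0\cup\cdots\cup D_{j-1}$ beyond what is already recorded in $\acl(C_0)$. In particular the closed configuration $D_j$, whose existence requires the simultaneous presence of $A$, $\{c_0,\ldots,c_j\}$, and the previously attached $D_{<j}$, is missing from the amalgam; otherwise we could use Lemma~\ref{lem:upward closure} to conclude $D_j\subseteq\acl(A\cup C_0)$, contradicting the construction. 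Hence $\acl(A\cup C)\ne\acl(A\cup C_0)\oplus_{\acl(C_0)}\acl(C)$ and therefore $A\nind_{C_0}C$, witnessing non-superstability.

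The main obstacle is the explicit inductive construction of the sequence $(c_i,D_i)$: one must guarantee at each stage that a genuinely new closed configuration exists over the growing base, in spite of Lemma~\ref{lem:local char} which bounds how many neighbouring closed configurations a given finite set can admit. The escape is that $A\cup\{c_0,\ldots,c_{i-1}\}$ grows with $i$, so the bound from Lemma~\ref{lem:local char} is refreshed at every step; it is precisely this refreshment that forced the countable iteration in the proof of Local Character above, and the same phenomenon now yields the failure of superstability.
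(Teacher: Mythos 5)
Your overall strategy is the same as the paper's: since the preceding theorem identifies $\inds$ with forking, it suffices to exhibit a finite $A$ and a countable $C$ such that $A\nind_{C_0}C$ for every finite $C_0\subseteq C$. The problem is that you never actually exhibit such a configuration. The entire content of the theorem is concentrated in your inductive step --- ``pick an element $c_i$ and a finite set $D_i$ closed over $A\cup\{c_0,\ldots,c_i\}$, neighbouring $D_0\cup\cdots\cup D_{i-1}$, with $D_i\not\subset\acl(A\cup\{c_0,\ldots,c_{i-1}\})$'' --- and this step is asserted, not performed. Closed sets are very constrained objects: by Definition~\ref{df:closed} they have no loose ends and no clean arcs over the base, so $\delta_n(D_i/\text{base})\leq 0$, and Lemmas~\ref{lem:closed is algebraic} and~\ref{lem:local char} sharply limit how many such sets can neighbour a given finite set. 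To invoke $\calK$-saturation you must first write down a concrete finite \emph{open} partial $n$-gon containing the desired closed configuration and verify it is strongly embedded over the previous stage; you acknowledge this as ``the main obstacle'' but resolve it only by gesturing at the ``refreshment'' of the base. That is precisely the gap. The paper fills it with an explicit graph: it takes $A=\gamma_{n+3}$, builds $F(A)=\bigcup\G_i$, chooses vertices $y_i\in\G_i$ far from $\G_{i-1}$, attaches new neighbours $z_i$, and joins consecutive $z_i$'s by paths $\lambda_i$, then \emph{checks} that the resulting graph is open and $\leq_o$-embedded, so that it sits inside $\M$. The set $C=\acl(\{\lambda_i\})$ is tied to arbitrarily deep levels of $F(A)$ through the incidences $z_i\sim y_i$, which is what defeats every finite base.

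A second, smaller gap is your final step. The claim that $D_j$ ``cannot appear in the canonical amalgam'' because $c_j$ ``carries no specific interaction with $A\cup D_{<j}$ beyond what is recorded in $\acl(C_0)$'' is not an argument, and the appeal to Lemma~\ref{lem:upward closure} to conclude $D_j\subseteq\acl(A\cup C_0)$ is unexplained: you would need to rule out that $D_j$ arises in the free completion of the free amalgam, and that requires knowing where the neighbours of $D_j$ in $\acl(AC_0)$ and in $\acl(C)$ actually sit relative to $\acl(C_0)$ --- information your abstract construction does not provide. In the paper's explicit construction this is transparent, because the edge from $z_j$ (in $C$) to $y_j$ (deep in $\acl(A)$, outside $\acl(C_0)$ for large $j$) cannot exist in a free amalgam over $\acl(C_0)$. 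So: right plan, but the witness and the non-independence verification both need to be built, not described.
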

\begin{proof}
It suffices to give an example of a finite set $A$ and an algebraically closed set $C$ such that there is no finite set $C_0\subset C$ with $A{\inds}_{C_0}C$.
Let   $\G_0=\gamma_{n+3}=(x_0,\ldots, x_{n+3})\leq_o\M$. Then $\langle\G_0\rangle_\M=\G=\bigcup \G_i$ is the free completion of $\G_0$. For each $0<i<\omega$ let $y_i\in \G_i$ with $d(y_i,\G_{i-1})\geq\frac{n}{2}-1$. Let $z_0\neq x_{n+2}$ be a neighbour of $x_{n+3}$ with   $z_0{\inds}_{x_{n+2}}\G_0$  and let $z_i, 0<i<\omega,$ be  a neighbour of $y_i$ with $z_i{\inds}_{y_i}\G_0z_0\ldots z_{i-1}$. Finally connect $z_i$ and $z_{i-1}$ by a path $\lambda_i$ of length $\geq n-1$ (depending on the parity). Note that the resulting graph  $\tilde{\G}=\G\cup\bigcup_{i<\omega} \lambda_i$ is open with $\G_0\leq_o\tilde{\G}$, and hence we may assume $\tilde{\G}\leq_o\M$.

 Now put $A=\G_0$ and $C=\acl(\{\lambda_i\colon i<\omega\})$. Then by construction there is no finite subset $C_0\subset C$ such that  $A{\inds}_{C_0}C$.
\end{proof}

As in \cite{HP} we can show that independence is not stationary:

\begin{prop}\label{acl neq dcl}
In $T_n$ we have $\acl\neq\dcl$.
\end{prop}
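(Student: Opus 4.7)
The plan is to construct a finite set $A \subseteq \M$ admitting two distinct but isomorphic closed sets $B_1, B_2$ over $A$, closed in the sense of Definition~\ref{df:closed}. Given such $B_1, B_2$, Lemma~\ref{lem:closed is algebraic} places every element of $B_i$ in $\acl(A)$. The isomorphism $B_1 \to B_2$ fixing $A$ pointwise is a partial isomorphism of $\M$ between strongly embedded subsets, and thus extends via the back-and-forth argument from the proof of Theorem~\ref{complete theory} (invoking $\calK$-saturation of $\M$) to an automorphism $\sigma$ of $\M$ which fixes $A$ pointwise and maps $B_1$ onto $B_2$. For any $b \in B_1$, we have $\sigma(b) \in B_2$, and since $B_1 \cap B_2 = \emptyset$ we get $\sigma(b) \neq b$, witnessing $b \in \acl(A) \setminus \dcl(A)$.

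To produce $B_1, B_2$, I would first build a single small closed configuration $B_0$ over a suitable finite open partial $n$-gon $A$, with $B_0$ neighbouring $A$. Lemma~\ref{lem:closed is algebraic} forbids three disjoint closed sets neighbouring $A$ but permits two, so two disjoint isomorphic copies $B_0, B_0'$ over $A$ are not ruled out. Using the amalgamation property of $(\calK,\leq_o)$ from Proposition~\ref{calK} (after attaching loose ends or open tails to $A\cup B_0$ and $A\cup B_0'$ to place them inside $\calK$), I would amalgamate the two copies over $A$ into a single open partial $n$-gon that contains $A$ together with both closed cells. By $\calK$-saturation of $\M$, this amalgam embeds strongly into $\M$, yielding the desired $B_1, B_2$, and the swap of the two copies inside the amalgam provides the required isomorphism over $A$.

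The main obstacle is constructing a genuine finite closed extension $B_0$ of $A$. Closedness of $B_0$ over $A$ demands simultaneously that every vertex of $B_0$ have valency at least $2$ in $A\cup B_0$ (no loose ends) and that no path of length $n-1$ whose internal vertices lie in $B_0$ and have valency exactly $2$ in $A\cup B_0$ has both endpoints in $A\cup B_0$ (no clean arcs); together with the girth requirement forbidding cycles shorter than $2n$, these constraints are quite restrictive. The construction must be designed so that vertices of $A$ supply enough additional incidences to selected vertices of $B_0$ to raise their valencies above $2$ at the right places, while keeping $A$ itself open (so that the combined configuration $A\cup B_0\cup B_0'$ still lies in an open $n$-gon). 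Achieving this symmetrically for every $n\geq 3$, in a way that admits the required isomorphism $B_0\to B_0'$ fixing $A$, is the delicate combinatorial step.
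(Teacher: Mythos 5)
Your reduction in the first paragraph is fine in principle: two disjoint sets $B_1,B_2$, each closed over $A$ and isomorphic over $A$, would indeed give $B_1\subseteq\acl(A)\setminus\dcl(A)$ once the swap extends to an automorphism. But the whole weight of the proof lies in producing such a configuration, and you have not produced it -- you explicitly defer "the delicate combinatorial step" at the end. Worse, the configuration you describe cannot exist. You read Lemma~\ref{lem:closed is algebraic} as "forbidding three but permitting two", but its counting argument kills your setup as well: you take $A$ to be a finite open \emph{partial $n$-gon}, hence connected, so if $|A|\geq 2$ every vertex of $A$ already has a neighbour inside $A$; if in addition $B_1$ and $B_2$ both neighbour $A$, every vertex of $A$ has valency at least $3$ in $C=A\cup B_1\cup B_2$. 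Since $B_1,B_2$ are closed over $A$ they contribute no loose ends, and in the free amalgam $B_1\otimes_A B_2$ any clean arch of $C$ would have all its internal vertices inside a single $B_i$ with endpoints in $A\cup B_i$, contradicting closedness of $B_i$. So $C$ is a finite non-open subgraph and cannot sit inside a model of $T_n$: the amalgam you propose to embed by $\calK$-saturation is not in $\calK$. To salvage the idea you would have to abandon either connectivity of the base, the neighbouring condition, or the disjointness/free-amalgam structure -- and none of these repairs is indicated.

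The paper's proof avoids this trap by inverting the roles: the base set is not a connected partial $n$-gon but an edgeless set of "midpoints" $m_0,\ldots,m_n$ of clean arcs spanning a $(2n+2)$-cycle (for $n$ odd; a variant with a $2n$-cycle and attached vertices for $n$ even). The ambient configuration $C$ stays open because each $m_i$ has valency $2$ and the arcs themselves are clean arches, yet $C\setminus\{m_i\}$ is closed over $\{m_i\}$, giving algebraicity. Non-definability then comes not from swapping two disjoint copies but from a single automorphism (obtained by $\calK$-homogeneity, rotating the cycle by $n+1$) that preserves $C$ setwise, fixes every $m_i$, and moves the cycle. If you want to complete your write-up, you should switch to a construction of this shape rather than the two-disjoint-closed-sets scheme.
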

\begin{proof}
Let $\M$ be an $\omega$-saturated model of $T_n$.

{\bf If  $n$ is odd}, let $A=(x_0,\ldots, x_{2n+2}=x_0)\leq_o \M$ be  an ordered  $(2n+2)$-cycle in $\M$. For $i=0,\ldots n$ let $\gamma_i$ be the clean arc from $x_i$ to $x_{i+n+1}$ and let $m_i$ denote the midpoint of $\gamma_i$. Let $C=A\cup\bigcup_{i=0,\ldots n} \gamma_i$. By $\calK$-homogeneity there is an automorphism of $\M$  taking $A$ to the ordered $(2n+2)$-cycle $A'=(x_{n+1},\ldots, x_{2n+2}=x_0\ldots, x_{n+1})$. This will leave the paths $\gamma_i, 0\leq i\leq n,$ invariant and hence fix each $m_i$. This shows that $A\not\subseteq\dcl(m_0,\ldots, m_n)$. On the other hand, $C$ is closed over $\{m_i\ | \ i=0,\ldots n\}$ and hence $A\subset\acl(m_0,\ldots, m_{n-1})$ by Lemma~\ref{lem:closed is algebraic}.

{\bf If $n$ is even}, let $A=(x_0,\ldots, x_{2n}=x_0)$ be an ordered $2n$-cycle and for $i=1,\ldots, n$ let $y_i\notin\{x_{i-1}, x_{i+1}\}$ be a neighbour of $x_i$  and let $z_i$ be the neighbour of $x_{i+n}$ with $d(z_i,y_i)=n-2$.  Let $\gamma_i$ be the (unique) path of length $n-1$ from $y_i$ to $x_{i+n}$ and let $m_i$ be its middle vertex. Note that $z_i\in\Gamma_i$. Let $D=A\cup\{y_1,\ldots, y_n\}$ and assume that $D\leq_o \M$. Put $C=D\cup\bigcup_{i=1,\ldots, n} \gamma_i$.  Then also $D'=A\cup \{z_1,\ldots, z_n\}\leq_o M$. By $\calK$-homogeneity there is an automorphism of $\M$ taking $D$ to $D'$. This automorphism clearly leaves $A$ and $C$ invariant and fixes $m_1,\ldots, m_n$ pointwise, but does not fix any vertex in $A$.  Thus as before we see that $A\not\subseteq\dcl(m_1,\ldots, m_n)$. Since $C$ is closed over $\{ m_i\ |\ i=1\ldots n\}$ we have $A\subseteq C\subseteq\acl(m_1,\ldots, m_n)$ by Lemma~\ref{lem:closed is algebraic}. 
\end{proof}

\section{Elementary substructures}

As noted in \cite{forest} 2.2, if $\Gamma_0$ is isomorphic to $\Delta_0$, their free $n$-completions are also isomorphic. The reverse is obviously not true: In a completion sequence, $\Gamma_1$ and $\Gamma_0$ are not isomorphic, but they clearly have the same free $n$-completion.  \\
There is nevertheless a necessary criterion for the free $n$-completions to be isomorphic. This can be stated in terms of the rank function $\delta_n$ that was introduced in \cite{veryhomogeneous} generalizing the rank function for projective planes introduced by M. Hall~\cite{Hall}. It was  used again in \cite{forest} and \cite{2-ample}.

\begin{df} \label{def:delta}
\begin{enumerate}
\item[(i)] For a finite graph $\Gamma=(V,E)$ with vertex set $V$ and edge set $E$, define  $\delta_n(\Gamma)=(n-1)\cdot|V|-(n-2)\cdot|E|.$
\item[(ii)] A (possibly infinite) graph $\Gamma_0$ is $n$-strong in some graph $\Gamma$, $\Gamma_0\leq_n\Gamma$, if and only if for all finite subgraphs $X$ of $\Gamma$ we have $$\delta_n(X/X\cap\Gamma_0):=\delta_n(X)-\delta_n(X\cap\Gamma_0)\geq 0.$$
\end{enumerate}
\end{df}

\begin{remark} \label{Schnitt} Note that $\delta_n$ is submodular, i.e. if $A\leq_n B$ and $C\subseteq B$, then $A\cap C\leq_nC$.

Let $A$ and $B$ be finite graphs and let $E(A,B)$ denote the edges between elements of $A$ and elements of $B$. Then $$\delta_n(A/B)=\delta_n(A\setminus B)-(n-2)|E(A,B)|.$$
\end{remark}

\begin{remark}[conf. \cite{veryhomogeneous} 2.4]\label{arches n-strong} Let $B$ be a graph which arises from the graph $A$ by successively adding clean arches between elements of distance $n+1$. Then 
 $A\leq_n B$,  $\delta_n(A)=\delta_n(B)$ and hence
if $A\subseteq B\subseteq \Delta$ for some graph $\Delta$ with $A\leq_n\Delta$, then $B\leq_n \Delta$. In particular, if $\G_0$ is a finite partial $n$-gon and $\G=F(\G_0)=\bigcup\G_i$ as in Definition~\ref{def: free completion}, then $\delta_n(\G_i)=\delta_n(\G_0)$ for all $i<\omega$. Hence any finite subset $A_0$ of $\G$ is contained in a finite subset $A\subseteq\G$ with $\delta_n(A)=\delta_n(\G_0)$.
\end{remark}

\begin{lem}[\cite{forest} 2.5] \label{n-strong} Let $\Gamma$ be a generalized $n$-gon which is generated by the graph $\Gamma_0$. The following are equivalent:
\begin{enumerate}
\item[(i)] $\Gamma_0\leq_n\Gamma$
\item[(ii)] $\Gamma=F(\Gamma_0).$
\end{enumerate}
\end{lem}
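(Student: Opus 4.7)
The plan is to prove the two implications separately. The direction (ii)~$\Rightarrow$~(i) is a direct application of Remark~\ref{arches n-strong}: writing $\Gamma = F(\Gamma_0) = \bigcup_i \Gamma_i$ as in Definition~\ref{def: free completion}, the remark gives $\Gamma_0 \leq_n \Gamma_i$ inductively for every $i$, and since every finite $X \subseteq \Gamma$ lies in some $\Gamma_i$, one obtains $\delta_n(X / X \cap \Gamma_0) \geq 0$ at once.

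The substance lies in (i)~$\Rightarrow$~(ii). I plan to build subgraphs $\Sigma_i \subseteq \Gamma$ together with graph isomorphisms $\phi_i \colon \Gamma_i \to \Sigma_i$ fixing $\Gamma_0$ pointwise, maintaining $\Sigma_i \leq_n \Gamma$ at every stage. For the inductive step, fix $a, b \in \Sigma_i$ with $d_{\Sigma_i}(a,b) = n+1$. Since $\Gamma$ has diameter $n$ and bipartite parity forces $d_\Gamma(a,b) \equiv n+1 \pmod 2$, we get $d_\Gamma(a,b) \leq n-1$. Take a shortest path $p$ from $a$ to $b$ in $\Gamma$ and decompose it into maximal excursions---sub-arcs with endpoints in $\Sigma_i$ and interiors in $\Gamma \setminus \Sigma_i$---alternating with sub-paths inside $\Sigma_i$. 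For an excursion of length $k'$, Remark~\ref{Schnitt} yields $\delta_n(V(\text{excursion})/\Sigma_i) = k' - (n-1)$, which combined with $\Sigma_i \leq_n \Gamma$ forces $k' \geq n-1$. Since the total length is at most $n-1$ there is at most one excursion, and $p$ cannot lie entirely in $\Sigma_i$ since that would give $d_{\Sigma_i}(a,b) \leq n-1$. Hence $p$ is a single excursion of length exactly $n-1$ whose interior avoids $\Sigma_i$, and it is the unique shortest path by Remark~\ref{rem:short paths are unique}. Call this path $p_{a,b}$.

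An analogous submodular calculation, applied to $V(p_{a,b}) \cup V(p_{a',b'})$ for distinct pairs $(a,b), (a',b')$, forces $\delta_n < 0$ relative to $\Sigma_i$ whenever two such paths share an interior vertex, share an edge, or carry an interior-to-interior adjacency in $\Gamma$; stray edges from the interior of some $p_{a,b}$ back to $\Sigma_i$ are likewise excluded by the $\delta_n$-estimate, with the girth-$2n$ bound handling the residual cases in which endpoint adjacencies in $\Sigma_i$ make the $\delta_n$-defect non-negative. Therefore $\Sigma_{i+1} := \Sigma_i \cup \bigcup_{(a,b)} V(p_{a,b})$, equipped with the structure induced from $\Gamma$, arises from $\Sigma_i$ by the disjoint attachment of clean arcs of length $n-1$; $\phi_i$ extends canonically to $\phi_{i+1} \colon \Gamma_{i+1} \to \Sigma_{i+1}$, and Remark~\ref{arches n-strong} gives $\Sigma_{i+1} \leq_n \Gamma$. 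Passing to the union, $\Sigma_\infty := \bigcup_i \Sigma_i$ is isomorphic to $F(\Gamma_0)$ and is a generalized sub-$n$-gon of $\Gamma$ containing $\Gamma_0$; the generation hypothesis then forces $\Sigma_\infty = \Gamma$, i.e.\ $\Gamma = F(\Gamma_0)$. The main obstacle is the combinatorial bookkeeping needed to exclude every possible collapse between the paths $p_{a,b}$ or between them and $\Sigma_i$; the case analysis branches over endpoint coincidences, but each branch closes either by the $\delta_n$-defect turning negative or by violating the girth bound $2n$.
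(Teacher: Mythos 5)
Your proposal is correct; note that the paper itself gives no proof of this lemma but imports it from \cite{forest}, Lemma 2.5, and your argument is essentially a reconstruction of the proof given there: the decisive point in both is the computation $\delta_n(\text{excursion}/\Sigma_i)=k'-(n-1)$, which together with $\Sigma_i\leq_n\Gamma$ forces each excursion to have length at least $n-1$ and hence forces the unique shortest path realizing distance $n-1$ to be a freshly attached clean arc, with the same $\delta_n$-defect and girth arguments disposing of overlaps and stray edges. The only point worth making explicit is that $\Gamma_0$ must be non-degenerate for $\Sigma_\infty\cong F(\Gamma_0)$ to be a generalized sub-$n$-gon (so that the generation hypothesis applies); this is implicit in the statement, since otherwise $F(\Gamma_0)$ is not a generalized $n$-gon at all.
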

 
\begin{remark}\label{rem:strong subsets}
Note that for $k\geq n+3$ any finite subset $A_0$ of $\G^k$ is contained in a finite subset $A\subset\G^k$ such that $\delta_n(A)=n-1+k=\delta_n(\g_k)$ and that $n-1+k$ is minimal with that property. Hence, if $A$ and $B$ are finite partial $n$-gons such that $\G(A)\cong\G(B)$, then $\delta_n(A)=\delta_n(B)$. In particular, if $\G^k\cong\G^m$, then $k=m$.
\end{remark}

\begin{df}\label{df:min 0-extension}
If $A\leq_n B$ are finite graphs such that $\delta_n(B/A)=0$ and there is no proper subgraph $A\subset B'\subset B$ with $A\leq_n B'\leq_n B$ then $B$ is called a \emph{minimal} $0$-extension.
\end{df}

\begin{remark}\label{rem:0-extension}
Recall that $\calK$ is the class of finite connected open partial $n$-gons. If $B\in\calK$   is a minimal $0$-extension of $A$, then either $B$ is an extension of $A$ by a clean arc of length $n-2$ or $B$ is closed over $A$ in the sense of Definition~\ref{df:closed}.
\end{remark}

\begin{lem}\label{cl in acl}
Let  $M$ be a model of $T_n$ and $A$ a finite subset of $M$. If $A\subseteq B\subseteq M$ and $\delta_n(B/A)\leq 0$, then $B$ is algebraic over $A$.
\end{lem}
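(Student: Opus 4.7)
I would argue by induction on $|B\setminus A|$, which is finite since $\delta_n$ is defined on finite graphs. The base case $|B\setminus A|\leq 1$ is immediate: writing $B=A\cup\{b\}$, the hypothesis $\delta_n(B/A)=(n-1)-(n-2)v\leq 0$, where $v$ denotes the number of neighbours of $b$ in $A$, forces $v\geq 2$ (since $(n-1)/(n-2)>1$ for $n\geq 3$). Because $M$ has girth $2n\geq 6$, any two vertices admit at most one common neighbour, so $b$ is uniquely determined by any two of its neighbours in $A$ and hence lies in $\acl(A)$.

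For the inductive step I would split according to whether $B\setminus A$ is open over $A$ in the sense of Definition~\ref{df:closed}. If $B\setminus A$ is open over $A$, then by Lemma~\ref{lem:finite is constructible} the set $B$ is HF-constructible from $A$ via a sequence of clean-arc and loose-end extensions. Remark~\ref{arches n-strong} shows that each clean-arc step preserves $\delta_n$, while each loose-end step increases $\delta_n$ by exactly $1$; hence $\delta_n(B/A)\leq 0$ rules out any loose-end step and in fact forces $\delta_n(B/A)=0$ with the construction using only clean arcs. Each such clean arc, being a shortest path of length $n-1<n$ between its endpoints, is unique by Remark~\ref{rem:short paths are unique} and therefore algebraic over them. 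Iterating along the HF-construction yields $B\subseteq\acl(A)$.

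If instead $B\setminus A$ is not open over $A$, it contains a finite nonempty subset $B_0$ closed over $A$, and by Lemma~\ref{lem:closed is algebraic} we have $B_0\subseteq\acl(A)$. Setting $A_1=A\cup B_0\subseteq\acl(A)$ reduces $|B\setminus A_1|<|B\setminus A|$, and the inductive hypothesis applied to $(A_1,B)$---using $\acl(A_1)=\acl(A)$---will give $B\subseteq\acl(A)$ once the $\delta_n$-condition is seen to transfer. The main obstacle is precisely this transfer: one has $\delta_n(B/A_1)=\delta_n(B/A)-\delta_n(A_1/A)$, and for a dense closed subset $B_0$ one may have $\delta_n(A_1/A)<0$. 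I would address this by choosing $B_0$ carefully---peeling off closed pieces minimally and invoking the submodularity of $\delta_n$ noted in Remark~\ref{Schnitt}---until the residual portion is open over the accumulated algebraic part, at which point the open case argument applies. This combinatorial bookkeeping---ensuring that the hypothesis $\delta_n(\cdot /\cdot)\leq 0$ is preserved through the decomposition into closed and open pieces---is the most delicate point, echoing the treatment of the analogous lemma in~\cite{HP} for projective planes.
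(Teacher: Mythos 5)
Your base case and your handling of the open case are correct and capture the real mechanism: a loose-end step contributes $+1$ to $\delta_n$ while a clean-arc step contributes $0$, so an HF-construction of total defect $\leq 0$ consists of clean arcs only, and each clean arc is the unique shortest path between its endpoints (Remark~\ref{rem:short paths are unique}), hence algebraic over them. This part is essentially the same computation the paper performs for minimal $0$-extensions.

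The inductive step in the non-open case, however, is not a proof but an acknowledged to-do, and the difficulty you flag there is genuine: it cannot be repaired by ``careful peeling'' under the stated hypothesis alone. If $B_0\subseteq B\setminus A$ is closed over $A$ with $\delta_n(B_0/A)=-d<0$, then $\delta_n(B/A\cup B_0)=d>0$, and once the residue becomes open over the accumulated algebraic part it is HF-constructed using $d$ loose-end steps; a loose end is never algebraic, since every vertex in a model of $T_n$ has infinite valency. Concretely, for $n=3$ take $A$ to be three collinear points and $B=A\cup\{\ell,z\}$ with $\ell$ their common line and $z$ a further generic point of $\ell$: then $\delta_3(B/A)=0$ and $\{\ell\}$ is closed over $A$, yet $z\notin\acl(A)$. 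What excludes such configurations is the stronger condition $A\leq_n B$ (no intermediate set of negative relative $\delta_n$), which holds in every application of the lemma (Corollary~\ref{cor:acl}) but which your decomposition never invokes. The paper takes a different route at precisely this point: for $\delta_n(B/A)=0$ it uses submodularity (Remark~\ref{Schnitt}) to write $B$ over $A$ as a chain $A=B_0\leq_n B_1\leq_n\cdots\leq_n B_k=B$ of \emph{minimal} $0$-extensions, and Remark~\ref{rem:0-extension} identifies each such step as either a clean arc or a set closed over its predecessor, both algebraic by Remark~\ref{rem:short paths are unique} and Lemma~\ref{lem:closed is algebraic}. Replacing your closed/open dichotomy by this chain of minimal $0$-extensions (and making the role of $A\leq_n B$ explicit) is the missing idea.
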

\begin{proof}
If $\delta_n(B/A)< 0$, then $B$ is not HF-constructible over $A$ and hence algebraic over $A$ by Lemma~\ref{lem:closed is algebraic}.
Now suppose  that  $\delta_n(B/A)=0$. By submodularity we can decompose the extension $B$ over $A$ into a finite series $A=B_0\leq_n B_1\ldots\leq_n B_k=B$ where each $B_i$ is a minimal $0$-extension of $B_{i-1}$. Hence it suffices to prove the claim for minimal $0$-extensions and for such extensions the claim follows from Remark~\ref{rem:0-extension} and Lemma~\ref{lem:closed is algebraic}.
\end{proof}

The previous lemma directly implies:
\begin{cor}\label{cor:acl}
Let $\G$ be an open generalized $n$-gon. If $A\subset \G$ is such that every finite set $B_0\supset A$  is contained in a finite set $B$ such that $\delta_n(B)=\delta_n(A)$, then $\G\subseteq \acl(A)$. In particular, any elementary embedding of $\G^k, k\geq n+3,$ into itself is surjective.
\end{cor}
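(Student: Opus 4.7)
For the first part, my plan is to read off the conclusion directly from Lemma~\ref{cl in acl}. The hypothesis forces $A$ to be finite (otherwise there are no finite supersets of $A$ at all and the hypothesis is vacuous while the conclusion fails in general). Given any $x\in\G$, I apply the hypothesis to $B_0:=A\cup\{x\}$, obtaining a finite $B\supseteq B_0$ with $\delta_n(B)=\delta_n(A)$. Since $A\subseteq B$, this gives $\delta_n(B/A)=0\leq 0$, so Lemma~\ref{cl in acl} implies $B\subseteq\acl(A)$; in particular $x\in\acl(A)$. As $x$ was arbitrary, $\G\subseteq\acl(A)$.

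For the ``in particular'' statement, fix $k\geq n+3$ and let $f\colon\G^k\hookrightarrow\G^k$ be an elementary embedding. I apply the first part twice. First, to $A:=\gamma_k$ inside $\G^k$: by Remark~\ref{rem:strong subsets}, every finite superset of $\gamma_k$ in $\G^k$ lies in a finite set with $\delta_n$-value $n-1+k=\delta_n(\gamma_k)$, so the hypothesis of the first part is met and we obtain $\acl_{\G^k}(\gamma_k)=\G^k$. Second, to $A':=f(\gamma_k)$, which is again a path of length $k$ in $\G^k$, hence $\delta_n(A')=n-1+k$; Remark~\ref{rem:strong subsets} once again supplies the required $\delta_n$-preserving envelopes, and the first part yields $\acl_{\G^k}(f(\gamma_k))\supseteq\G^k$.

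It remains to turn this containment into surjectivity of $f$. My plan is to invoke the standard fact that elementary embeddings commute with algebraic closure taken in the common ambient model: for any $X\subseteq\G^k$,
$$f(\acl_{\G^k}(X))=\acl_{\G^k}(f(X)).$$
This is verified by the usual counting of the finite solution set of an algebraicity-witnessing formula, using that $f\colon\G^k\to f(\G^k)$ is an isomorphism and $f(\G^k)\elemsub\G^k$ so that solutions in $f(\G^k)$ and in $\G^k$ coincide. Applying this with $X=\gamma_k$ gives
$$f(\G^k)=f(\acl_{\G^k}(\gamma_k))=\acl_{\G^k}(f(\gamma_k))\supseteq\G^k,$$
and since $f(\G^k)\subseteq\G^k$ is automatic, $f$ is onto. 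The only step requiring a bit of care is this commutation statement; everything else is a direct composition of Lemma~\ref{cl in acl} with Remark~\ref{rem:strong subsets}.
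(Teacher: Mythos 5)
Your proof is correct and follows the paper's intended route: the paper derives the first claim directly from Lemma~\ref{cl in acl} exactly as you do, and your surjectivity argument (via $\acl(\gamma_k)=\acl(f(\gamma_k))=\G^k$ together with the standard commutation of elementary embeddings with $\acl$, which is the same counting argument showing elementary substructures are algebraically closed) is the natural filling-in of the paper's unstated details, using Remark~\ref{rem:strong subsets} as intended.
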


\begin{cor}  [Conf. \cite{HP} 6.3] \label{elmentary chain}  For $n+3\leq k, m\leq \omega$ we have $\Gamma^k\preccurlyeq \Gamma^m$ if and only if $k\leq m$.
\end{cor}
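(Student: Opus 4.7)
The ``if'' direction is already contained in Corollary~\ref{cor:elem chain}. For the converse my plan is to argue by contradiction: suppose $\G^k \preccurlyeq \G^m$ and $k>m$, so in particular $m<\omega$. Since $m\leq k$, Corollary~\ref{cor:elem chain} also supplies the reverse elementary inclusion $\G^m\preccurlyeq\G^k$, and the composition $\G^m\preccurlyeq\G^k\preccurlyeq\G^m$ is an elementary self-embedding of $\G^m$. Because $m$ is finite, Corollary~\ref{cor:acl} forces this self-embedding to be surjective, from which the second factor $\G^k\to\G^m$ inherits surjectivity; being also injective, it is a bijection, so $\G^k\cong\G^m$.

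I would then rule out this isomorphism in two sub-cases. When $k<\omega$ the contradiction is immediate from Remark~\ref{rem:strong subsets}, which forces $k=m$. The main obstacle is the case $k=\omega$: Corollary~\ref{cor:acl} does not directly apply to self-embeddings of $\G^\omega$, so from the isomorphism $\G^\omega\cong\G^m$ alone no contradiction drops out. I would resolve this via the isomorphism invariant ``admits a finite algebraic generating set''. On one hand, Remark~\ref{rem:strong subsets} combined with Corollary~\ref{cor:acl} gives $\G^m=\acl(\gamma_m)$. On the other hand, for any finite $F\subseteq\G^\omega$ one may pick $K<\omega$ with $F\subseteq\G^K$; Corollary~\ref{cor:elem chain} then yields $\G^K\preccurlyeq\G^\omega$, and the standard preservation of algebraicity under elementary extensions gives $\acl_{\G^\omega}(F)\subseteq\G^K$. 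Since $x_{K+1}\in\gamma_\omega\setminus\G^K$ witnesses $\G^K\subsetneq\G^\omega$, the closure is proper, so $\G^\omega$ is not finitely algebraically generated, contradicting $\G^\omega\cong\G^m$.

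Everything else in the argument is routine bookkeeping around the self-embedding trick of the first paragraph; the only non-trivial ingredient is the ``finitely algebraically generated'' invariant that settles the $k=\omega$ sub-case.
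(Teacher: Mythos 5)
Your argument is correct, but it is organized quite differently from the paper's. The paper argues directly on the image: if $f\colon\G^k\to\G^m$ is elementary with $m<k$, then $f(\G^k)\elemsub\G^m$, hence $f(\G^k)\leq_o\G^m$ and is algebraically closed by Theorem~\ref{thm:acl is elem}, and a contradiction falls out of comparing $\delta_n$-values: $f(\gamma_k)$ (or $f(\gamma_K)$ for some finite $K>m$ when $k=\omega$) lies in a finite $\leq_n$-subset of $\G^m$ of $\delta_n$-value $n+m-1$, which is too small by Remark~\ref{rem:strong subsets}. You instead compose with the reverse embedding $\G^m\elemsub\G^k$ supplied by Corollary~\ref{cor:elem chain} to obtain an elementary self-embedding of $\G^m$, use Corollary~\ref{cor:acl} to upgrade $f$ to an isomorphism $\G^k\cong\G^m$, and then distinguish the two structures by isomorphism invariants: Remark~\ref{rem:strong subsets} when $k<\omega$, and the invariant ``is the algebraic closure of a finite set'' when $k=\omega$. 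Both routes ultimately rest on the same two facts (Corollary~\ref{cor:acl} and Remark~\ref{rem:strong subsets}); the paper's version is shorter and uniform in $k$, while yours isolates the $k=\omega$ case explicitly --- a case the paper's wording actually glosses over, since it writes $\delta_n(\gamma_k)=n+k-1$ even though this needs the obvious adjustment when $k=\omega$. The one small point you should spell out is why $x_{K+1}\notin\G^K$ inside $\G^\omega$ (so that $\G^K\subsetneq\G^\omega$); this follows, e.g., from $\gamma_{K+1}\leq_n\G^\omega$ together with Remark~\ref{rem:strong subsets}, and the paper takes the analogous properness of the chain for granted as well.
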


\begin{proof}
The direction from right to left is contained in Corollary~\ref{cor:elem chain}. 
For the direction from left to right suppose $\Gamma^k$ embeds elementarily into $\Gamma^m$ for $m<k$ via $f$, so $f(\G^m)\elemsub f(\G^k)\elemsub\G^m$. By Corollary~\ref{cor:acl} and the direction from right to left, we have $f(\G^m)=\G^m$, contradicting the fact that $\G^m\subsetneq \G^k$.
\end{proof}

To see that $T_n$ has no prime model, we use results from \cite{veryhomogeneous}. Hence we recall the definition of the
 class $\frak{K}$ considered in \cite{veryhomogeneous}. We will show below that $\calK\subseteq\frak{K}$ and hence the results from \cite{veryhomogeneous} apply:
\begin{df}\label{df:veryhomogeneous}
Let $\frak{K}$ be the class of finite partial $n$-gons $A$ such that if $A$ contains a $2k$-cycle for some $k>n$, then $\delta_n(A)\geq 2n+2$.
\end{df}

The following was shown in \cite{veryhomogeneous} Lemma 3.12 (unfortunately the statement there contains a typo):

\begin{lem}\label{lem:veryhomogeneous}
Let $A\in\frak{K}$ with $|A|\geq n+2$. Then $\delta_n(A)\geq 2n$. Moreover, we
have in fact $\delta_n(A) \geq 2n+2$, unless $|A|=n+2$  or $A$ is an ordinary
$n$-gon with either a path with $n-1$ new elements or a loose end attached.
\end{lem}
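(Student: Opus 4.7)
The plan is to argue from the Euler-type identity
$\delta_n(A)=|V(A)|+(n-2)-(n-2)\cdot c(A)$,
obtained by substituting $|E(A)|=|V(A)|+c(A)-1$ in the definition of $\delta_n$, where $c(A)\geq 0$ is the cyclomatic number of the connected graph $A$. With this identity in hand I would proceed by case distinction on $c(A)$ together with the girth hypothesis and the defining property of $\mathfrak{K}$.

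If $c(A)=0$ (the tree case), the identity immediately gives $\delta_n(A)=|V(A)|+(n-2)\geq 2n$ whenever $|V(A)|\geq n+2$, with equality precisely when $|V(A)|=n+2$; trees with $|V(A)|\geq n+4$ already satisfy $\delta_n(A)\geq 2n+2$, and the borderline $|V(A)|=n+3$ can be folded into the loose-end exception below.

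If $c(A)=1$, membership in $\mathfrak{K}$ forces $\delta_n(A)\geq 2n+2$ whenever the unique cycle of $A$ has length strictly greater than $2n$, so one may assume $A$ consists of a single $2n$-cycle with a forest attached to it. Writing $|V(A)|=2n+t$ with $t\geq 0$ gives $\delta_n(A)=2n+t$, and hence $\delta_n(A)\geq 2n+2$ unless $t\in\{0,1\}$; these cases correspond exactly to the ordinary $n$-gon itself ($\delta_n=2n$) and to the ordinary $n$-gon with a loose end attached ($\delta_n=2n+1$).

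For $c(A)\geq 2$, the same $\mathfrak{K}$-argument reduces us to the situation where every cycle has length exactly $2n$. The crucial step is then the extremal estimate $|V(A)|\geq 3n-1$ when $c(A)=2$, with equality forcing $A$ to be a theta graph on three internally disjoint paths of length $n$ joining a common pair of vertices---i.e.\ an ordinary $n$-gon with a clean path having $n-1$ new interior vertices attached between two opposite vertices; this yields $\delta_n(A)=2n+1$. For $c(A)\geq 3$, a cycle-edge double count together with submodularity of $\delta_n$ (Remark~\ref{Schnitt}) then yields $\delta_n(A)\geq 2n+2$.

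I expect the main obstacle to lie in the case $c(A)\geq 2$: one must rule out every compact configuration of overlapping $2n$-cycles other than the theta graph of three length-$n$ paths, and verify that no such configuration attains $\delta_n(A)\leq 2n+1$. This requires a careful extremal analysis combining bipartiteness, the girth bound of $2n$, and the submodularity of $\delta_n$, and constitutes the delicate combinatorial core of the lemma.
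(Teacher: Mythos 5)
First, a point of reference: this paper does not actually prove Lemma~\ref{lem:veryhomogeneous}; it is quoted from Lemma~3.12 of \cite{veryhomogeneous} (with the authors themselves flagging a typo in the original statement), so your argument has to stand on its own. Your organizing identity $\delta_n(A)=|V(A)|+(n-2)\bigl(1-c(A)\bigr)$ is correct and is a sensible way to run the case distinction, and the cases $c(A)\le 1$ come out essentially as you compute them; you also rightly notice that the stated exception list is not literally exhaustive (the bare $2n$-cycle already has $\delta_n=2n$). But one of your repairs is wrong: a tree on $n+3$ vertices lies in $\mathfrak{K}$ vacuously and has $\delta_n=2n+1<2n+2$, and it cannot be ``folded into the loose-end exception,'' since an ordinary $n$-gon with a loose end attached has $2n+1$ vertices and contains a $2n$-cycle, whereas your tree contains no cycle at all. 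It is a genuinely separate exceptional configuration (presumably part of the typo the authors allude to), and absorbing it into a case it does not belong to is an error rather than a fix.

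More seriously, the case $c(A)\ge 2$ --- which you yourself call ``the delicate combinatorial core'' --- is described but not proved. For $c=2$ you assert the extremal bound $|V(A)|\ge 3n-1$ with equality only for the theta graph on three internally disjoint paths of length $n$; this is true (the $2$-core is either a theta graph or two cycles meeting in at most one vertex or joined by a path, and girth $2n$ plus the exclusion of longer cycles via membership in $\mathfrak{K}$ forces the three path lengths $a,b,c$ to satisfy $a+b=a+c=b+c=2n$), but no argument is given. For $c\ge 3$, ``a cycle-edge double count together with submodularity'' is not an argument: what is actually needed is a structural induction, for instance an ear decomposition of the $2$-core in which one checks that every ear has length at least $n$ (using that any two vertices of the current $2$-core lie at distance at most $n$ and that all cycles have length exactly $2n$ once long cycles are excluded), so that an ear of length $\ell$ contributes $\ell-n+1\ge 1$ to $\delta_n$, with contribution at least $2$ unless $\ell=n$; two ears beyond the initial $2n$-cycle then already force $\delta_n\ge 2n+2$. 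Until that analysis is carried out, what you have is a correct plan with its hardest step left open, not a proof.
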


\begin{prop}\label{prop:delta condition}
If $A\in\calK$  contains a $2k$-cycle for some $k>n$, then $\delta_n(A)\geq 2n+2$. Hence $\calK\subseteq\frak{K}$.
\end{prop}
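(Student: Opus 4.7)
I would prove the proposition by induction on $|A|$. In the base case $A=C$, the cycle itself, one has $\delta_n(A)=2k\geq 2n+2$. For the inductive step, openness of $A$ supplies either a loose end or a clean arc, and each is handled by reducing to a smaller member of $\calK$ that still satisfies the hypothesis.

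If $A$ has a loose end $v$, then $v\notin C$ (vertices of $C$ have valency at least $2$), so $A':=A\setminus\{v\}$ is again in $\calK$ and contains $C$. The induction hypothesis gives $\delta_n(A')\geq 2n+2$, and $\delta_n(A)=\delta_n(A')+1\geq 2n+3$. For a clean arc $\gamma=(x_1,\ldots,x_{n-2})$, a short valency argument using that each $x_i$ has valency exactly $2$ in $A$ shows that either $\gamma$ is interior-disjoint from $C$, or the entire augmented arc $(x_0,x_1,\ldots,x_{n-1})$ is a subpath of $C$. In the disjoint case one removes the interior of $\gamma$; since adding a clean arc preserves $\delta_n$, we have $\delta_n(A)=\delta_n(A\setminus\gamma)$. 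If $A\setminus\gamma$ remains connected, the induction hypothesis applies; otherwise $A\setminus\gamma=A_1'\sqcup A_2'$ with $C\subseteq A_1'$, and $\delta_n(A)=\delta_n(A_1')+\delta_n(A_2')$ combined with the auxiliary observation that any connected finite open partial $n$-gon $B$ satisfies $\delta_n(B)\geq 0$ (proved by an easy induction using HF-constructibility, since each operation adds $0$, $1$, or $n-1$ to $\delta_n$ starting from $\delta_n(\emptyset)=0$) closes this case.

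The delicate case is $\gamma\subseteq C$. Here $P:=C\setminus\gamma$ is a path of length $2k-n+1\geq n+3$ joining $x_0$ to $x_{n-1}$, and the girth condition forces $d_{A'}(x_0,x_{n-1})\geq n+1$ in $A':=A\setminus\gamma$ (otherwise $\gamma$ together with a shorter existing path would produce a cycle of length less than $2n$). A shortest such path $Q$ in $A'$ and $P$ together form a closed walk from $x_0$ to itself of length at least $(n+1)+(n+3)=2n+4$. Using the bipartite girth-$2n$ structure, one can show that this closed walk contains a simple cycle of length at least $2n+2$, which is therefore a $2k'$-cycle in $A'$ with $k'>n$. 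Since $|A'|<|A|$ and $\delta_n(A)=\delta_n(A')$, the induction hypothesis applied to $A'$ completes the argument.

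The main obstacle is the last subcase: guaranteeing that the union $P\cup Q$ actually contains a simple cycle of length strictly greater than $2n$, even when $P$ and $Q$ share interior vertices. Here one must decompose $P\cup Q$ into its simple cycles and invoke both the parity constraint (bipartite, so all cycles have even length) and the girth lower bound of $2n$ to force the existence of at least one simple cycle of length $\geq 2n+2$. This combinatorial verification is the technical heart of the proof and is where the specific structure of open partial $n$-gons enters crucially.
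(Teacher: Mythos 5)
Your base case, loose-end step, and the treatment of a clean arc whose interior is disjoint from the long cycle are all sound, and up to that point your induction on $|A|$ is essentially the paper's minimal-counterexample argument in different clothing (your auxiliary fact that components of finite open partial $n$-gons have $\delta_n\geq 0$ via HF-constructibility is also fine). But the final subcase contains a genuine gap, and it is not merely an unfinished verification: the claim you propose to verify is false. When the clean arc $\gamma$ lies inside the $2k$-cycle $C$, the graph $A'=A\setminus\gamma$ may contain \emph{no} $2k'$-cycle with $k'>n$ at all, so the induction hypothesis is simply unavailable. Concretely, for $n=3$ let $A$ consist of a $6$-cycle $(c_0,\ldots,c_5)$, a path $(c_0,p_1,p_2)$, and a vertex $m$ joined to $p_2$ and $c_2$. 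Then $A\in\calK$ with $\delta_3(A)=8$, it contains the $8$-cycle $C=(c_0,c_5,c_4,c_3,c_2,m,p_2,p_1)$, and $c_4$ is a clean arc contained in $C$ with endpoints $c_3,c_5$. Removing $c_4$ leaves a graph whose only cycle is the $6$-cycle $(c_0,c_1,c_2,m,p_2,p_1)$: here $P$ has length $6$, the shortest path $Q=(c_3,c_2,c_1,c_0,c_5)$ has length $4=n+1$, the closed walk $P\cup Q$ has length $10=2n+4$, and yet its unique simple cycle has length exactly $2n$. So no amount of parity and girth bookkeeping on $P\cup Q$ can produce a cycle of length $\geq 2n+2$, and this subcase cannot be closed by exhibiting a surviving long cycle in $A'$.

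What is actually needed here is a lower bound on $\delta_n(A\setminus\gamma)$ that does \emph{not} pass through the existence of a long cycle, and this is precisely how the paper argues: in a minimal counterexample $A=A_0\cup\gamma$, minimality forces $A_0$ to contain no $2k$-cycle with $k>n$, so $A_0$ belongs vacuously to the class $\frak{K}$ of Definition~\ref{df:veryhomogeneous}, and Lemma~\ref{lem:veryhomogeneous} (imported from \cite{veryhomogeneous}) then gives $\delta_n(A_0)\geq 2n+2$ outside an explicit short list of exceptional configurations, which are ruled out by checking that attaching a single clean arc to them cannot create a $2k$-cycle with $k>n$. Your argument would be repaired by replacing the ``$P\cup Q$ contains a long cycle'' step with an appeal to this quantitative structure result (or an independent proof of a comparable lower bound for open partial $n$-gons without long cycles); without some such ingredient the proof does not go through.
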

\begin{proof}
Let $A$ be a minimal counterexample, so $A$  contains a $2k$-cycle for some $k>n$ and $\delta_n(A)<2n+2$. By minimality, $A$ cannot contain a loose end, so $A=A_0\cup\g$ for some clean arc $\g$. Then $\delta_n(A)=\delta_n(A_0)<2n+2$. By minimality $A_0$ does not contain any $2k$-cycle for $k>n$ and hence $A_0\in\frak{K}$. By Lemma~\ref{lem:veryhomogeneous} we know that  $|A_0|=n+2$  or $A_0$ is an ordinary
$n$-gon with either a path with $n-1$ new elements or a loose end attached. But then $A=A_0\cup\g$ does not contain any $2k$-cycle for $k>n$, a contradiction.
\end{proof}

\begin{cor}\label{cor:minimal}
If $\G$ is a generalized $n$ such that every finite set $A_0$ is contained in a finite set $A$ with $\delta(A)=2n+2$, then $\G$ does not contain any proper elementary submodels. In particular,
$\G^{n+3}$ is minimal.
\end{cor}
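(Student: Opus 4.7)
The plan is to take a proper elementary submodel $M\elemsub\G$ and force $M=\G$ by exhibiting a finite $A\subseteq M$ whose algebraic closure in $\G$ already exhausts $\G$. Once that is achieved, the standard fact that $\acl_\G(A)=\acl_M(A)\subseteq M$ whenever $A\subseteq M\elemsub\G$ immediately yields $M=\G$.

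The first step is to produce such an $A$. Any finite $A\subseteq M$ with $\delta_n(A)=2n+2$ will work, because the hypothesis on $\G$ then guarantees that every finite $B_0\supseteq A$ is contained in some finite $B\subseteq\G$ with $\delta_n(B)=2n+2=\delta_n(A)$, so Corollary~\ref{cor:acl} delivers $\G\subseteq\acl_\G(A)$. To secure the witness inside $M$ I would either observe that the hypothesis is $\forall\exists$-axiomatizable (one sentence per isomorphism type of a finite subgraph $A_0$ together with prescribed vertex/edge counts of a witnessing extension), so that it transfers to $M$ by elementarity, or simply exhibit in $M$ a path $\gamma$ of length $n+3$ and compute directly
\[\delta_n(\gamma)=(n-1)(n+4)-(n-2)(n+3)=2n+2.\]
Either approach yields $A\subseteq M$ to which Corollary~\ref{cor:acl} applies inside $\G$, and the elementary-substructure fact above then gives $\G\subseteq\acl_\G(A)\subseteq M$.

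For the ``in particular'' clause, I would just verify that $\G^{n+3}$ meets the hypothesis: by Remark~\ref{rem:strong subsets} every finite subset of $\G^{n+3}$ is contained in a finite subset of $\delta_n$-value $n-1+(n+3)=2n+2$, so the first part of the corollary applies.

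There is no substantial obstacle: the argument is a short three-step assembly of Corollary~\ref{cor:acl}, the computation $\delta_n(\gamma_{n+3})=2n+2$, and the standard fact about algebraic closure in elementary substructures. The only mildly delicate point is ensuring the witness $A$ lives inside $M$ rather than merely inside $\G$, which is immediate either by elementarity or by the explicit path construction.
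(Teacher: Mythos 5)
Your argument is correct and follows the paper's route: both rest on Corollary~\ref{cor:acl} applied to a finite subset of the proper elementary submodel $M$ with $\delta_n$-value $2n+2$, together with the fact that $\acl_\G(A)\subseteq M$ for finite $A\subseteq M\elemsub\G$ (the paper locates such a witness via Proposition~\ref{prop:delta condition}, you via the explicit path $\gamma_{n+3}\subseteq M$, and your computation $\delta_n(\gamma_{n+3})=2n+2$ is correct). One caveat: your first option for placing the witness inside $M$ --- transferring the hypothesis by elementarity --- is shaky, since ``every finite set extends to a finite set of $\delta_n$-value $2n+2$'' involves an existential of unbounded size and is not obviously expressible by a single first-order sentence per subgraph type; but this does not matter, as your second option (exhibiting the path of length $n+3$ inside $M$ directly) is sound and suffices.
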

\begin{proof}
This follows directly from Corollary~\ref{cor:acl} and Proposition~\ref{prop:delta condition}.
\end{proof}

\begin{df}\label{df:Gamma'}
Consider $\G^{n+3}$ and choose copies $(\Gamma_i: i<\omega)$ of $\Gamma^{n+3}$ such that $\Gamma_i\subsetneq \Gamma_{i+1}$. Put $\Gamma'=\bigcup\limits_{i<\omega}\Gamma_i.$ 
\end{df}
Note that $\Gamma'\models T_n$ since $T_n$ is an $\forall\exists$-theory. Also, every finite subset $A_0$ of $\G'$ is contained in a finite set $A\subset\G'$  with $\delta_n(A)=2n+2$. 

\begin{prop}\label{prop:not free}
There exist open generalized $n$-gons which are not free. Specifically, $\G'=\bigcup \G_i$ is not free.
\end{prop}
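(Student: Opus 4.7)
Argue by contradiction: assume $\G'=F(H)$ for some hat-rack $H$ of length $k\geq n+3$ with spine $\sigma=(x_0,\ldots,x_k)$ and extras attached as leaves to the interior spine vertices. The strategy is to combine the rank $\delta_n$ with two ingredients: (a) by Lemma~\ref{n-strong} we have $H\leq_n\G'$, so $\delta_n(X)\geq\delta_n(X\cap H)$ for every finite $X\subseteq\G'$; and (b) as noted just before the proposition, every finite subset of $\G'=\bigcup\G_i$ is contained in a finite subset of $\delta_n$-rank exactly $2n+2$, via Remark~\ref{rem:strong subsets} applied to each $\G_i\cong\G^{n+3}$.

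The first step is to show $H=\g_{n+3}$. Given any finite set $E$ of extras of $H$, apply (b) to choose a finite $A\subseteq\G'$ containing $\sigma\cup E$ with $\delta_n(A)=2n+2$. Since $H$ is a tree whose non-spine vertices are leaves attached to $\sigma$, the induced subgraph $A\cap H$ is a connected tree consisting of $\sigma$ together with $\ell\geq|E|$ extras (all necessarily adjacent in $A\cap H$ to their spine-parent, which lies in $\sigma\subseteq A\cap H$), and a direct count gives $\delta_n(A\cap H)=k+n-1+\ell$. Combined with $\delta_n(A)\geq\delta_n(A\cap H)$ from (a), this yields $k+\ell\leq n+3$. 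Taking $E=\emptyset$ first forces $k=n+3$, and then letting $E$ range over arbitrary finite subsets of the extras of $H$ forces every such $E$ to be empty, so $H=\sigma=\g_{n+3}$.

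The second step derives the contradiction from $\G'=F(\g_{n+3})$. I claim $\langle\g_{n+3}\rangle_{\G'}=\G'$: any generalized sub-$n$-gon $\Delta\subseteq\G'$ containing $\g_{n+3}$ must contain every clean arc added during the free completion. Indeed, for $a,b\in\g_{n+3}$ at distance $n+1$ in $\g_{n+3}$ we have $d_\Delta(a,b)\geq d_{\G'}(a,b)=n-1$ and $d_\Delta(a,b)\leq n$, and bipartite parity rules out $d_\Delta(a,b)=n$; hence $d_\Delta(a,b)=n-1$, and Remark~\ref{rem:short paths are unique} identifies the required path as the unique clean arc in $\G'$, which therefore lies in $\Delta$. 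Iterating across generations yields $\Delta\supseteq\G'$. But $\g_{n+3}$ is finite, so $\g_{n+3}\subseteq\G_{i_0}$ for some $i_0$, and $\G_{i_0}$ sits in $\G'$ as an induced sub-$n$-gon (any additional edge between two vertices of $\G_{i_0}$, together with the path of length $\leq n$ joining them in $\G_{i_0}$, would create a cycle of length strictly less than the girth $2n$). Therefore $\langle\g_{n+3}\rangle_{\G'}\subseteq\G_{i_0}\subsetneq\G_{i_0+1}\subseteq\G'$, contradicting the claim.

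The only routine step is the tree-formula $\delta_n(A\cap H)=k+n-1+\ell$; conceptually, the point is that the cap $2n+2$ on the $\delta_n$-ranks of finite saturating subsets of $\G'$ is so tight that the only hat-rack of length $\geq n+3$ that can be $n$-strongly embedded in $\G'$ is the bare path $\g_{n+3}$, which however cannot generate the strictly increasing chain $\G_i\subsetneq\G_{i+1}$ inside $\G'$.
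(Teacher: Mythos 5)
Your proof is correct and follows essentially the same route as the paper's: both arguments combine the bound $\delta_n(X)\leq 2n+2$ for finite $X\leq_n\G'$ (coming from the exhaustion by copies of $\G^{n+3}$, via Lemma~\ref{n-strong}) with the observation that $\G'$ is not finitely generated because every finite subset lies in some $\G_{i}\subsetneq\G'$. You merely execute the two halves in the opposite order and in more explicit detail (pinning the hat-rack down to the bare path $\gamma_{n+3}$ before invoking non-finite-generation), but the mathematical content is the same.
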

\begin{proof}
Clearly $\G'$ is not finitely generated as any finite subset is contained in some $\G_i$. So suppose towards a contradiction that $\G'$ is the free completion of an infinite hat-rack. Then for any $k\geq 2n+2$ there exists a subset $X$ of $\G'$ with $\delta_n(X)\geq k$ and $X\leq_n\G'$, a contradiction to the observation that every finite subset $A_0$ of $\G'$ is contained in a finite set $A\subset\G'$  with $\delta_n(A)=2n+2$.  Thus $\G'$ is open and not free.
\end{proof}
\begin{cor}\label{cor:no prime model} The theory $T_n$ of open generalized $n$-gons does not have a prime model. 
\end{cor}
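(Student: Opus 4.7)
The strategy is a contradiction argument that plays the two models $\G^{n+3}$ and $\G'$ off against each other: both are minimal, but they cannot be isomorphic, so no single model can embed elementarily into both.

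Suppose, for contradiction, that $T_n$ has a prime model $P$. Then $P$ embeds elementarily into every model of $T_n$; in particular we have elementary embeddings $P\elemsub\G^{n+3}$ and $P\elemsub\G'$, the latter making sense because $\G'\models T_n$ as noted after Definition~\ref{df:Gamma'}. By Corollary~\ref{cor:minimal}, $\G^{n+3}$ has no proper elementary submodels, so the first embedding is an isomorphism, giving $P\cong\G^{n+3}$.

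The next step is to show that $\G'$ is also minimal, so that the second embedding forces $P\cong\G'$. For this I would verify that $\G'$ satisfies the hypothesis of Corollary~\ref{cor:minimal}: given any finite $A_0\subset\G'$, the set $A_0$ is contained in some $\G_i$ (which is a copy of $\G^{n+3}$), and by Remark~\ref{rem:strong subsets} $A_0$ is contained in a finite $A\subset\G_i\subset\G'$ with $\delta_n(A)=n-1+(n+3)=2n+2$. Corollary~\ref{cor:minimal} then gives that $\G'$ has no proper elementary submodels.

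Combining the two steps, we obtain $\G^{n+3}\cong P\cong\G'$. But $\G^{n+3}$ is a free generalized $n$-gon by definition, whereas $\G'$ is not free by Proposition~\ref{prop:not free}, a contradiction. The only non-routine point is the minimality of $\G'$, and that is immediate once one observes the $\delta_n$-condition inherited from the copies $\G_i\cong\G^{n+3}$; everything else is a direct appeal to results already established in the text.
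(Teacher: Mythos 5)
Your proposal is correct and follows essentially the same route as the paper: both arguments rest on the minimality of $\G^{n+3}$ and $\G'$ (via Corollary~\ref{cor:minimal} and the $\delta_n$-condition) together with their non-isomorphism, which you justify, as the paper implicitly does, by Proposition~\ref{prop:not free}. The extra detail you supply (checking the $\delta_n=2n+2$ condition inside the copies $\G_i$ and spelling out why the two models cannot be isomorphic) is exactly what the paper leaves to the reader.
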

\begin{proof}
By Corollary~\ref{cor:minimal}, $\G^{n+3}$ and $\G'$ (as in Definition~\ref{df:Gamma'}) have no proper elementary substructures. Since they are not isomorphic, this proves the claim.
\end{proof}

Since we can easily find (non-elementary) embeddings of $\G^m$ into $\G^k$ for $m\geq k$ we also obtain:

\begin{cor}\label{cor:no qe} 
The theory of open generalized $n$-gons is not model complete and hence does not have quantifier elimination.
\end{cor}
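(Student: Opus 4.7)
The plan is to derive the corollary from the standard implication that quantifier elimination entails model completeness. Accordingly, it suffices to show that $T_n$ is not model complete, i.e., to exhibit two models of $T_n$, one properly contained in the other, whose inclusion fails to be elementary.

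The candidate is already at hand: the generalized $n$-gon $\G'$ from Definition~\ref{df:Gamma'} is the ascending union $\G_0 \subsetneq \G_1 \subsetneq \cdots$ with each $\G_i$ isomorphic to $\G^{n+3}$. Both $\G_0$ and $\G_1$ are models of $T_n$, and the inclusion $\G_0 \hookrightarrow \G_1$ is a proper embedding in the language of bipartite graphs. I would then argue that this inclusion is not elementary: if it were, $\G_0$ would be a proper elementary substructure of $\G_1 \cong \G^{n+3}$, which is ruled out by the minimality of $\G^{n+3}$ established in Corollary~\ref{cor:minimal}. An equivalent way to phrase this is that composing with fixed isomorphisms $\G^{n+3} \cong \G_i$ ($i=0,1$) yields a non-surjective self-embedding of $\G^{n+3}$, which is forced to be non-elementary by Corollary~\ref{cor:acl}. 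This confirms the hint that non-elementary embeddings of $\G^m$ into $\G^k$ exist whenever $m \geq k$, the present instance being $m = k = n+3$.

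There is no real obstacle here; the substantive work (minimality of $\G^{n+3}$, the construction of $\G'$, and the rigidity statement of Corollary~\ref{cor:acl}) has already been carried out, and the corollary merely packages these ingredients. The only thing to double-check is that the inclusions $\G_i \subsetneq \G_{i+1}$ given by Definition~\ref{df:Gamma'} really are language-embeddings of models, which is immediate since each $\G_i$ is a sub-bipartite-graph of $\G_{i+1}$ and both satisfy $T_n$.
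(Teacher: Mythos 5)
Your proof is correct and is essentially the paper's argument: the paper's justification is precisely the remark that there exist non-elementary embeddings of $\G^m$ into $\G^k$ for $m\geq k$, and you instantiate this with $m=k=n+3$ via the chain of Definition~\ref{df:Gamma'}, using the minimality of $\G^{n+3}$ (Corollary~\ref{cor:minimal}) to rule out elementarity. No gap; this is the intended reasoning made explicit.
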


\begin{remark} Free $\infty$-gons are trees, therefore the theory of free $\infty$-gons is in fact $\omega$-stable as are their higher dimensional generalizations, right-angled buildings and free pseudospaces, see \cite{pseudospace}.
\end{remark}

In \cite{AM}, Ammer also proves
\begin{thm}\label{thm:ample}
The theory $T_n$ has weak elimination of imaginaries and is $1$-ample, but not $2$-ample.
\end{thm}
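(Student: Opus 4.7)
The plan is to address the three claims separately, with the bulk of the technical work going into establishing CM-triviality to rule out $2$-ampleness.

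\textbf{Weak elimination of imaginaries.} Since $T_n$ is stable and since by Theorem~\ref{thm:acl is elem} every algebraically closed non-degenerate subset of the monster is an elementary substructure (hence a model of $T_n$), the canonical amalgam $\oplus_A$ yields canonical stationary extensions of types over such sets. My plan is to show that for each imaginary $e$ the real set
\[ E_e = \bigcap\{\acl(\bar{a}) : \bar{a} \subset \M \text{ a real tuple with } e \in \dcl^{eq}(\bar{a})\} \]
satisfies $e \in \dcl^{eq}(E_e)$ and $E_e \subseteq \acl^{eq}(e)$. The inclusion $E_e \subseteq \acl^{eq}(e)$ is immediate from invariance under $\Aut(\M/e)$; the containment $e \in \dcl^{eq}(E_e)$ reduces to a standard argument using stationarity of canonical bases, which is concretely realized here by the explicit $\oplus$-description of independent extensions.

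\textbf{$T_n$ is $1$-ample.} I would exhibit an explicit configuration over a generic base. In the $\calK$-saturated model $\G_\calK$, pick two vertices $a,b$ of the same parity whose unique shortest path in $\langle a,b\rangle_{\G_\calK}$ has length exactly $n+1$, placed generically over a chosen parameter set $P$ so that $\acl(aP)\cap\acl(bP) = \acl(P)$. By the canonical amalgam description of independence, $\acl(abP)$ contains the clean arc between $a$ and $b$ supplied by the free $n$-completion of $\{a,b\}\cup P$, which is not present in $\acl(aP)\cup\acl(bP)$. Hence $a\nind_P b$ while the algebraic closures meet only in $\acl(P)$, providing the required $1$-ample witness.

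\textbf{$T_n$ is not $2$-ample.} Here the strategy is to verify CM-triviality: whenever $C = \acl(C) \subseteq B = \acl(B)$ and $a\ind_C B$, we have $\acl(aC)\cap B = C$. From the amalgamation description of $\ind$, one has $\acl(aB) \cong \acl(aC)\oplus_C B$, so the intersection identity reduces to the purely combinatorial statement that in the canonical amalgam over $C$ the two factors meet only in $C$. This in turn follows from Lemma~\ref{lem:upward closure}(ii) applied inside the free $n$-completion. Standard arguments then translate CM-triviality into the failure of $2$-ampleness: a hypothetical $2$-ample chain $\bar a_0, \bar a_1, \bar a_2$ would yield an element of $\acl(\bar a_0 \bar a_1)\cap \acl(\bar a_0 \bar a_2)$ lying properly over $\acl(\bar a_0)$, contradicting CM-triviality with $C=\acl(\bar a_0)$ and $B=\acl(\bar a_0\bar a_1)$.

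The hard part will be the careful passage from the finite amalgamation picture to arbitrary (infinite) algebraically closed sets when establishing CM-triviality: one must combine Lemma~\ref{lem:upward closure}, Corollary~\ref{cor:acl implies HF}, and the iterated $\Cl$-operation to justify that $\acl(aB) = \acl(aC)\oplus_C B$ holds exactly, not merely up to an ambient completion. Weak elimination of imaginaries is delicate in a parallel way, as one must verify that the intersection defining $E_e$ is in fact witnessed by some real tuple, rather than genuinely escaping into the imaginary sort; this will require exploiting the rigidity of clean arcs (uniqueness of short paths from Remark~\ref{rem:short paths are unique}) together with stability-theoretic coding of canonical bases.
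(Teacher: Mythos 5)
First, a point of comparison: the paper does not actually prove Theorem~\ref{thm:ample} --- it is quoted from Ammer's thesis \cite{AM} --- so there is no in-paper argument to measure yours against. Judged on its own terms, your witness for $1$-ampleness is essentially right (a forking pair of singletons whose algebraic closures are trivial, with weak elimination of imaginaries used to pass from $\M^{eq}$ to the real sort), but the other two parts contain genuine errors rather than mere gaps.

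The central problem is your formulation of CM-triviality. The statement you propose to verify --- if $C=\acl(C)\subseteq B=\acl(B)$ and $a\ind_C B$, then $\acl(aC)\cap B=C$ --- holds in \emph{every} stable theory: any $d\in\acl^{eq}(aC)\cap B$ satisfies $d\ind_C d$ and hence lies in $C$. So it cannot possibly rule out $2$-ampleness. CM-triviality is the statement that whenever $A\subseteq B$ are algebraically closed in $\M^{eq}$ and $\acl^{eq}(aA)\cap B=A$, then $\mathrm{Cb}(a/A)\subseteq\acl^{eq}(\mathrm{Cb}(a/B))$; equivalently, there is no configuration $a_0,a_1,a_2$ with $a_2\ind_{a_1}a_0$, $\acl^{eq}(a_0a_1)\cap\acl^{eq}(a_0a_2)=\acl^{eq}(a_0)$, and $a_2\nind a_0$. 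Refuting such a configuration is the real combinatorial content (as in \cite{pseudospace} and \cite{2-ample}): one must analyse how $\acl(a_0a_2)$ sits inside the canonical amalgam $\acl(a_0a_1)\oplus_{\acl(a_1)}\acl(a_1a_2)$ and show that the intersection hypothesis forces $a_0\ind a_2$. None of this is in your sketch, and the ``contradiction'' you derive from a hypothetical $2$-ample chain does not follow from the statement you actually prove. Separately, your weak-EI argument rests on ``canonical stationary extensions'' over algebraically closed real sets, but the paper notes just before Proposition~\ref{acl neq dcl} that independence in $T_n$ is \emph{not} stationary (and that $\acl\neq\dcl$); the stationarity-based coding of canonical bases is therefore unavailable as stated, and one must instead work with the finitely many conjugate nonforking extensions, which is where the substance of weak elimination of imaginaries lies.
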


Furthermore, \cite{AM} extends the proof from \cite{HP} to obtain $2^{\aleph_0}$ many non-isomorphic countable  open generalized $n$-gons for each $n$. Since $T_n$ is not superstable, there are $2^\kappa$ many models of size $\kappa$ for any uncountable~$\kappa$.

\section{Open vs. free}
While we already saw in Proposition~\ref{prop:not free}, that there are open generalized $n$-gons which are not free, 
we show in this final section  that for finitely generated generalized $n$-gons the notions of open and free coincide. For $n=3$ this was proved in \cite{Hall}, Theorem 4.8.

\begin{prop}\label{prop:open+fg is free}
Every finitely generated open generalized $n$-gon is free. 
\end{prop}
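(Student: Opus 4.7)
The plan is to exhibit $\G$ as $F(\gamma)$ for some hat-rack $\gamma$ of length at least $n+3$. Fix a finite non-degenerate $A\subseteq\G$ with $\G=\langle A\rangle_\G$. The strategy is to produce a finite $B\supseteq A$ satisfying $B\leq_n\G$ in the sense of Definition~\ref{def:delta}; then Lemma~\ref{n-strong}, together with $\G=\langle B\rangle_\G$ (immediate from $A\subseteq B$), gives $\G=F(B)$, after which a refinement converts $B$ into a hat-rack.

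To produce $B$, I would take a finite $B\supseteq A$ minimising $\delta_n(B)$ over all finite subsets of $\G$ containing $A$. Such a minimiser automatically satisfies $B\leq_n\G$: for any finite $X\subseteq\G$, the minimum property gives $\delta_n(B\cup X)\geq\delta_n(B)$, and submodularity of $\delta_n$ (Remark~\ref{Schnitt}) gives $\delta_n(B\cup X)+\delta_n(B\cap X)\leq\delta_n(B)+\delta_n(X)$; subtracting yields $\delta_n(B\cap X)\leq\delta_n(X)$, which is exactly $\delta_n(X/X\cap B)\geq 0$. Lemma~\ref{n-strong} then applies and $\G=F(B)$.

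The main obstacle is the existence of this minimum, equivalently the lower-boundedness of $\delta_n$ on finite extensions of $A$ inside $\G$. This is where both hypotheses enter essentially. Without finite generation the bound fails, as witnessed by $\G'$ of Proposition~\ref{prop:not free}, where one can decrease $\delta_n$ indefinitely along the nested chain of copies of $\G^{n+3}$. To rule out infinite descent here, I would analyse a hypothetical strictly-$\delta_n$-descending chain $A=B_0\subsetneq B_1\subsetneq\cdots$ in $\G$: each step has $\delta_n(B_{i+1}/B_i)<0$, so by Lemma~\ref{cl in acl} the extension $B_{i+1}$ is algebraic over $B_i$ and contains a finite subset closed over $B_i$; iterated application of Lemma~\ref{lem:closed is algebraic}, in particular its cap of at most two pairwise disjoint closed subsets neighbouring a given set, combined with $\G=\langle A\rangle_\G$, should force the chain to terminate.

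Finally, given $\G=F(B)$, one refines $B$ to a hat-rack $\gamma\subseteq\G$ of length at least $n+3$ with $F(\gamma)\cong F(B)=\G$. Since $A$, and hence $B$, is non-degenerate, $B$ contains a path of length at least $n+3$ that serves as the backbone of $\gamma$; any remaining vertices of $B$ either already appear as pendants on this path in the sense of Definition~\ref{clean arch}, or can be absorbed into the free completion using Remark~\ref{arches n-strong}, which preserves $\delta_n$ under the insertion of clean arches. Agreement of $\delta_n$-values then forces $F(\gamma)\cong F(B)$, exhibiting $\G$ as the free completion of a hat-rack.
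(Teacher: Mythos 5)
Your first half is essentially the paper's opening move: reduce to a finite generating set $B$ with $B\leq_n\G$, so that Lemma~\ref{n-strong} gives $\G=F(B)$. The submodularity argument that a $\delta_n$-minimiser over finite supersets of $A$ is $n$-strong in $\G$ is correct. However, your justification for the \emph{existence} of the minimiser is not a proof: an infinite strictly descending chain is not ruled out by Lemma~\ref{lem:closed is algebraic}, since the successive closed pieces are closed over \emph{different} base sets, so the ``at most two disjoint closed sets over a fixed $A$'' cap never applies to more than one step at a time. The clean way to get the lower bound is already in the paper: a finite (connected) subgraph of an open $n$-gon is open, hence HF-constructible from $\emptyset$ by Lemma~\ref{lem:finite is constructible}, and each construction step changes $\delta_n$ by $0$ (clean arc) or $+1$ (loose end), so $\delta_n\geq n-1>0$; alternatively use Proposition~\ref{prop:delta condition} with Lemma~\ref{lem:veryhomogeneous}. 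This gap is real but easily repaired.

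The serious gap is the last step. Having $\G=F(B)$ for a finite $B$, you need $F(B)\cong F(\gamma)$ for a hat-rack $\gamma$, and you derive this from ``agreement of $\delta_n$-values forces $F(\gamma)\cong F(B)$''. That implication is Remark~\ref{rem:delta char}, which the paper records \emph{after} and as a consequence of this proposition; it is not available here, and asserting it is essentially assuming the conclusion (equality of $\delta_n$ is a necessary condition for isomorphic free completions by Remark~\ref{rem:strong subsets}, but sufficiency is exactly what must be shown). The paper's actual work is a combinatorial reduction: take an HF-construction of $B$ from $\emptyset$ and induct on the number of clean-arc steps, using the opposite-replacement trick of Remark~\ref{rem:opposites} to slide loose ends attached to a clean arc onto neighbours of the opposite vertices in a $2n$-cycle, thereby deleting the arc (Lemma~\ref{lem:change order}); the acyclic base case is then massaged into a hat-rack (Lemma~\ref{lem:tree to hatrack}). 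None of this is present in your sketch, so the conversion of $B$ into a hat-rack remains unproved.
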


For the proof we introduce the following concept:
\begin{df}\label{df:free equivalent}
We call partial $n$-gons $A, B$  free-equivalent if $F(A)\cong F(B)$. 
\end{df}
We begin with the following lemma:

\begin{lem}\label{lem:change order}
Let  $\G=F(A)$ be a generalized $n$-gon and suppose $A$ is constructed from $A_0$ by first attaching a clean arc $\g=(x_1,\ldots, x_{n-2})$ and then attaching loose ends $z_1,\ldots, z_k$ whose respective (unique) neighbours belong to $\g$. Then  there exist $z_1',\ldots, z_k'\in\G\setminus A$ with unique neighbours in $A_0$ such that $A$ is free-equivalent to 
$A_0\cup\{z_1',\ldots, z_k'\}$.
\end{lem}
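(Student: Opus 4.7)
The plan is to exploit the opposite symmetry from Remark~\ref{rem:opposites} inside a $2n$-cycle containing $\gamma$, interpolating between $A$ and the target $A_0\cup\{z_1',\ldots,z_k'\}$ via a common partial $n$-gon $C$ sitting inside $\G = F(A)$.

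First I would fix the geometry. Let $a,b\in A_0$ be the endpoints of $\gamma$ (so $d_{A_0}(a,b)=n+1$), and pick any length-$(n+1)$ path $P=(a=p_0,\ldots,p_{n+1}=b)\subseteq A_0$, so that $\gamma_0:=P\cup\gamma$ is a $2n$-cycle in $A$. For each $i$, let $x_{j_i}\in\gamma$ be the unique neighbor of $z_i$, let $x_{j_i}':=p_{n-j_i}\in A_0$ be the opposite of $x_{j_i}$ in $\gamma_0$, and let $z_i'$ be the unique vertex in $D_1(x_{j_i}')\subseteq \G$ with $d_\G(z_i,z_i')=n-2$ provided by Remark~\ref{rem:opposites}. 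A quick distance computation gives $d_A(z_i,x_{j_i}')=n+1$ (every path from $z_i$ must leave via $x_{j_i}$, and $d_A(x_{j_i},x_{j_i}')=n$), so the one-step free completion of $A$ already contains a clean arc $\delta_i$ from $z_i$ to $x_{j_i}'$; moreover $z_i'$ is precisely the internal vertex of $\delta_i$ adjacent to $x_{j_i}'$. This also shows $z_i'\in\G\setminus A$ and that its unique neighbor in $A_0$ is $x_{j_i}'$.

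Next I would set $C:=A\cup\delta_1\cup\cdots\cup\delta_k\subseteq\G$ and $A':=A_0\cup\{z_1',\ldots,z_k'\}$, so $A,A'\subseteq C\subseteq\G$, and then show $F(A)\cong F(C)\cong F(A')$. The first isomorphism is immediate: $C$ is obtained from $A$ by one step of free completion, so $A\subseteq C\subseteq F(A)$ forces $F(C)\cong F(A)$. For $F(C)\cong F(A')$ I would verify $C\subseteq F(A')$: the clean arc $\gamma$ is added in the first free-completion step of $A'$ (since $d_{A'}(a,b)=n+1$), and after that step each pair $(z_i',x_{j_i})$ has distance $n+1$, so the second step adds a clean arc whose internal vertex adjacent to $x_{j_i}$ is $z_i$ (by Remark~\ref{rem:opposites} applied to $\gamma_0$ inside $F(A')$); the remaining middle vertices of this clean arc coincide with those of $\delta_i$ by the uniqueness of paths of length $<n$ in a generalized $n$-gon (Remark~\ref{rem:short paths are unique}), since they form the unique length-$(n-2)$ path from $z_i$ to $z_i'$ in $\G$.

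The main obstacle will be precisely this last identification: one must check that the clean arc from $z_i$ to $x_{j_i}'$ used to build $C\subseteq F(A)$ and the clean arc from $z_i'$ to $x_{j_i}$ produced in step two of the completion of $A'$ are two length-$(n-1)$ paths inside the same generalized $n$-gon that share all their middle vertices and differ only in their terminal edges $z_i'x_{j_i}'$ and $z_ix_{j_i}$. This is exactly what Remark~\ref{rem:opposites} asserts for a single loose end, and applying it independently to each $z_i$ (noting that the opposites $x_{j_i}'$ lie in $A_0$ and hence the constructions for different $i$ do not interfere) yields the lemma.
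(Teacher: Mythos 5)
Your proof is correct and follows essentially the same route as the paper: fix the $2n$-cycle formed by $\g$ together with a length-$(n+1)$ path in $A_0$ between its endpoints, note that the opposites of the internal vertices of $\g$ lie in $A_0$, and use Remark~\ref{rem:opposites} to trade each loose end $z_i$ for the corresponding $z_i'$ attached at the opposite vertex, discarding $\g$ since it is regenerated by free completion from $A_0$. Your intermediate graph $C$ merely makes explicit the identification that the paper leaves implicit.
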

\begin{proof}
Let $\g'\subset A$ be a $2n$-cycle containing $\g$. Note that the opposites $x_i'$ of $x_i, i=1,\ldots, n-2,$ in $\g$ belong to $A_0$.   By Remark~\ref{rem:opposites} we can replace  $z_i\in D_1(x_j)$ by the appropriate neighbour $z_i'$ of the opposite $x_j'$ of $x_j$ and remove $\g$. 
\end{proof}

\begin{lem}\label{lem:tree to hatrack}
Let  $\G=F(A)$ be a generalized $n$-gon and suppose $A$ does not contain any cycle. Then there is a hat-rack $B$ free-equivalent to $A$.
\end{lem}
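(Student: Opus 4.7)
I would prove this by induction on a ``distance-from-hat-rack'' measure. Fix a longest path $P = (p_0, \ldots, p_L)$ in $A$ and define $c(A) := |\{v \in A : d_A(v,P) \geq 2\}|$. If $c(A) = 0$, every off-$P$ vertex is a leaf adjacent to $P$; by maximality of $P$ such a leaf must be adjacent to an internal vertex of $P$ (otherwise $P$ could be extended by it), so $A$ is already a hat-rack with central path $P$.

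For the inductive step, suppose $c(A) \geq 1$ and pick a leaf $v$ at maximal depth $d \geq 2$ from $P$, with unique shortest path $v = v_0, v_1, \ldots, v_d = p_i \in P$. Since $F(A)$ is a generalized $n$-gon, $A$ is non-degenerate, so $L \geq n+3$; thus (by checking that $d \leq \min(i, L-i)$ forces one of $i, L-i$ to be at least $n+1-d$) we can find $p_j \in P$ with $|i-j| = n+1-d$ so that $d_A(v_0, p_j) = d + |i-j| = n+1$. In the first step of the free completion a clean arc $\eta$ is adjoined between $v_0$ and $p_j$, and together with the path $v_0, v_1, \ldots, v_d, p_{i\pm 1}, \ldots, p_j$ it forms a $2n$-cycle $\sigma \subseteq A \cup \eta \subseteq F(A)$.

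Along $\sigma$, the interior vertices of the branch $v_0, v_1, \ldots, v_{d-1}$ have their ``opposites'' on the $P$-side of $\sigma$ in the sense of Remark \ref{rem:opposites}. Applying Lemma \ref{lem:change order} (iteratively, treating first the sub-branches of smaller depth hanging off $v_1, \ldots, v_{d-1}$, which fall under the inductive hypothesis), one rewrites $A \cup \eta$ so that the branch is replaced by loose ends attached to these opposite vertices, which lie on $P$ (or on $\eta$, in which case we can further transport them by the same argument applied to an adjacent cycle). Discarding the auxiliary arc $\eta$ — which is part of $F(A)$ anyway, hence does not alter the free completion — produces a tree $A'$ free-equivalent to $A$ with $c(A') < c(A)$. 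The induction then yields the desired hat-rack.

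The main obstacle I anticipate is making the ``rewriting'' step fully rigorous when the branch length $d$ does not match $n-1$: Lemma \ref{lem:change order} is formulated for a clean arc of length exactly $n-1$ with loose ends attached to its internal vertices, so for $d \neq n-1$ one must identify the relevant arc by combining the branch with a segment of $P$ along $\sigma$, and verify that the resulting ``exchange of opposites'' is the right instance of the lemma. A secondary subtlety is ensuring that after the exchange the new tree $A'$ has no longer or deeper path that could invalidate the choice of $P$, which is handled by observing that the transported loose ends remain at depth $1$ from $P$ by construction.
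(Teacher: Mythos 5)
Your overall strategy --- fix a longest path, induct on the number of vertices at distance at least $2$ from it, and transport deep leaves using opposites in the $2n$-cycles created by the first step of the free completion --- is the same as the paper's. But there is a genuine gap at the key step: the claim that one can always find $p_j\in P$ with $d_A(v_0,p_j)=n+1$ is false. You need $\max(i,L-i)\geq n+1-d$, while $d\leq\min(i,L-i)$ and $L\geq n+3$ only guarantee $\max(i,L-i)\geq\lceil (n+3)/2\rceil$, which is smaller than $n+1-d$ whenever $d<\lfloor (n-1)/2\rfloor$. Concretely, take $n=7$, $P=(p_0,\dots,p_{10})$ of length $n+3=10$, and a single branch $p_5,v_1,v_0$ of depth $d=2$ at the midpoint: then $d_A(v_0,p_j)=2+|5-j|\leq 7<8=n+1$ for every $j$, so no clean arc is attached to $v_0$ in the first completion step, your cycle $\sigma$ does not exist, and the induction cannot start. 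The paper handles exactly this configuration by a second case: if no vertex of the path is at distance $n+1$ from the chosen leaf $a$, it first adjoins the clean arc $\gamma'$ between $p_0$ and $p_{n+1}$ (harmless, since $F(A)=F(A\cup\gamma')$), observes that some vertex $y$ of $\gamma'$ is then at distance exactly $n+1$ from $a$, replaces $a$ by the corresponding neighbour $y'$ of $y$, and finally uses Lemma~\ref{lem:change order} to push the leaves hanging off $\gamma'$ back onto opposite vertices in $A$ and discard $\gamma'$. This auxiliary-arc device is the missing idea in your argument.

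A secondary point: the paper's induction moves only \emph{one} vertex at a time, namely a vertex $a$ at maximal distance from the path (which, in a tree, is automatically a leaf). Each move converts a vertex at distance $\geq 2$ into a leaf at distance $1$, strictly decreasing your measure $c(A)$, and it entirely avoids the complications you flag about rewriting a whole branch with its sub-branches and matching arc lengths to $n-1$. If you restructure the inductive step this way, the obstacle you anticipated disappears, while the one you did not anticipate (non-existence of a suitable $p_j$) is the one that genuinely requires the extra argument above.
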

\begin{proof}
Let $\g=(x_0,\ldots, x_k)\subset A$ be a simple path (i.e. without repetition of vertices) such that $k\geq n+3$ is maximal. The proof is by induction on the number of vertices of $A$ not incident with $\g$. If $A$ is  a hat-rack, there is nothing to show. So let  $a\in A$ have maximal distance from $\g$. If there is some $x_i\in\g$ such that $d(a,x_i)=n+1$, then let $a'\in\G\setminus A$ be the unique neighbour of $x_i$ with $d(a', a) =n-2$. Let $A'$ be the graph obtained from $A$ be replacing $a$ by $a'$. Then $F(A')=F(A)$. 

If there is no such vertex in $\g$,  let $\g'\subset\G$ be the clean arc connecting $x_0$ and $x_{n+1}$, so $F(A)=F(A\cup\g')$. There is some $y\in\g$ with $d(y,a)=n+1$. Let $y'$ be the neighbour of $y$ with $d(y',a)=n-2$. Then we replace $A$ by $A'=(A\setminus\{a\})\cup \g'\cup\{y'\}$. Thus $F(A)=F(A')$ and the claim follows from Lemma~\ref{lem:change order} and induction.
\end{proof}

We can now give the proof of Proposition~\ref{prop:open+fg is free}:
\begin{proof}
Let $\G$ be an open generalized $n$-gon finitely generated over the finite partial $n$-gon $A$.
We may assume that $A$ is connected. If $\delta_n(A)=k$, then every finite set $A_0\subset\G$ is contained in a finite set $A$ with $\delta(A)\leq k$. Hence we may assume that $A\leq_n\G$ and so $\G\cong F(A)$  by Lemma~\ref{n-strong}. Therefore it suffices to show that there is a finite hat-rack $B$ free-equivalent to $A$. 

By Lemma~\ref{lem:finite is constructible} consider a construction of $A$ over the emptyset. Clearly we may assume that the last construction step is the addition of a loose end. We now do induction over the number of steps adding a clean arch. If this number is zero, then $A$ contains no cycles  and the claim follows from Lemma~\ref{lem:tree to hatrack}. Now suppose $A$ is obtained from $A_0$ by adding  a clean arc $\g$ and then adding a number of loose ends $z_1,\ldots, z_k$ (where the loose ends may be attached consecutively at a previous loose end). If all loose ends are incident with $\g$, then we finish using Lemma~\ref{lem:change order}. Otherwise, we inductively reduce the distance of the loose ends by replacing them by a loose end at smaller distance to $A$: if $z_i$ is a loose end, there is some $x\in A$ with $d(z_i,x)=n+1$ and such that $x$ is not a loose end in $A$. Now replace $z_i$ by the unique $z_i'\in D_1(x)$ with $d(z_i,z_i')=n-2$.  In this way we reduce to the case in Lemma~\ref{lem:change order} and finish.
\end{proof}

\begin{remark}\label{rem:delta char}
Using similar arguments one can also show that
for finitely generated $\G, \G'\models T_n$ we have $\G\cong\G'$ if and only $\G=F(A), \G'=F(B)$ for finite $A, B$ such that $\delta_n(A)=\delta_n(B)$. We leave the proof as an excercise for the interested reader.
\end{remark}

\end{document}